\tikzset{negated/.style={
		decoration={markings,
			mark= at position 0.5 with {
				\node[transform shape] (tempnode) {$\backslash$};
			}
		},
		postaction={decorate}
	}
}
\newtheorem{thm}{Theorem}[section]
\newtheorem{lem}[thm]{Lemma}
\newtheorem{prop}[thm]{Proposition}
\theoremstyle{definition}
\newtheorem{defn}[thm]{Definition}
\theoremstyle{remark}
\newtheorem{rem}[thm]{Remark}
\theoremstyle{definition}
\theoremstyle{definition}
\theoremstyle{remark}
\numberwithin{equation}{section}
\newcommand{\locala}{(A,\mathfrak{m},k)}
\newcommand{\localb}{(B,\mathfrak{n},l)}
\newcommand{\Tor}{\mathrm{Tor}}
\newcommand{\HH}{\mathrm{H}}
\newcommand{\eqdef}{\mathrel{\mathop:}=}
\newcommand{\pdim}{\mathrm{pd}}
\newcommand{\cidim}{\mathrm{CI}\text{-}\mathrm{dim}}
\newcommand{\xMapsto}[2][]{\ext@arrow 0599{\Mapstofill@}{#1}{#2}}
\def\Mapstofill@{\arrowfill@{\Mapstochar\Relbar}\Relbar\Rightarrow}
\NewDocumentEnvironment{property}{m o m}{
	\par\addvspace{\topsep}
	\noindent\hypertarget{#3}{\textbf{(#3): #1}}
	\IfValueT{#2}{\space(#2)}
	\phantomsection
	\label{#3}
	\par\nopagebreak
	\noindent\ignorespaces
}{
	\par\addvspace{\topsep}
}
\newcommand{\propref}[1]{\hyperref[#1]{(\textbf{#1})}}
\def\@tocline#1#2#3#4#5#6#7{\relax
	\ifnum #1>\c@tocdepth 
	\else
	\par \addpenalty\@secpenalty\addvspace{#2}%
	\begingroup \hyphenpenalty\@M
	\@ifempty{#4}{%
		\@tempdima\csname r@tocindent\number#1\endcsname\relax
	}{%
		\@tempdima#4\relax
	}%
	\parindent\z@ \leftskip#3\relax \advance\leftskip\@tempdima\relax
	\rightskip\@pnumwidth plus4em \parfillskip-\@pnumwidth
	#5\leavevmode\hskip-\@tempdima
	\ifcase #1
	\or\or \hskip 1em \or \hskip 2em \else \hskip 3em \fi%
	#6\nobreak\relax
	\hfill\hbox to\@pnumwidth{\@tocpagenum{#7}}\par
	\nobreak
	\endgroup
	\fi}
\begin{document}
	
	\title[]{Adequate complete intersection homomorphisms}%
	\author{Samuel Alvite and Javier Majadas}%
	\address{Departamento de Matem\'aticas, Facultad de Matem\'aticas, Universidad de Santiago de Compostela, E15782 Santiago de Compostela, Spain}%
	\email{samuel.alvite.pazo@usc.es, j.majadas@usc.es}%
	
	\thanks{$^{(\star)}$ This work was partially supported by Agencia Estatal de Investigaci\'on (Spain), grant PID2024-155502NB-I00 (European FEDER support included, UE) and by Xunta de Galicia through the Competitive Reference Groups (GRC) ED431C 2023/31. Samuel Alvite was also financially supported by Xunta de Galicia Scholarship ED481A-2023-032.}

	\keywords{Complete intersection, Andr\'e-Quillen homology}%
	\thanks{2020 {\em Mathematics Subject Classification.} Primary: 13C40, 13D03. Secondary: 13H10, 14B25.}

	\begin{abstract}
		We study three classes of local homomorphisms and their behavior with respect to the ascent and descent of the \emph{complete intersection} property. Crucially, they fall in between the already studied classes of complete intersection and quasi-complete intersection homomorphisms, while also repairing some of the issues these presented.
	\end{abstract}

	\maketitle
	\setcounter{secnumdepth}{1}
	\setcounter{section}{-1}
	\tableofcontents

	\section{Introduction}
	In this paper, we will consider the following problem: finding a \emph{naturally defined} ``good'' class of local homomorphisms of (commutative) noetherian local rings under which the property of being a \emph{complete intersection} ring ascends and descends. Here, it is to be understood that ``good'' means, at least, that the class is big enough (so that (a)+(b) $\Rightarrow$ (c) as in (i) below) and that other usual properties, weaker than complete intersection, also ascend and descend along the homomorphisms in this class (in the sense of (ii) and (iii) below). To make this precise, we define a class $ \mathcal{C} $ of local homomorphism of noetherian local rings to be \emph{ci-adequate} if for any $ f \colon A \to B $ in $\mathcal{C}$, the following statements hold:
	\begin{enumerate}
		\item[(i)] Any two of the following properties imply the remaining one:
		\begin{enumerate}
			\item[(a)] $ A $ is a complete intersection ring.
			\item[(b)] $ B $ is a complete intersection ring.
			\item[(c)] $ f \in \mathcal{C} $.
		\end{enumerate}
		\item[(ii)] If $ f \in \mathcal{C} $, $ A $ is Gorenstein if and only if $ B $ is Gorenstein.
		\item[(iii)] If $ f \in \mathcal{C} $, $ A $ is Cohen-Macaulay if and only if $ B $ is Cohen-Macaulay.
	\end{enumerate}
	
	An attempt at finding a \emph{ci-adequate} class could be the class of homomorphisms satisfying $ \HH_n( A, B, -)=0 $  for all $ n \geq 2 $ studied in \cite{AvramovAnnals}, where they are called \emph{complete intersection} (\emph{ci}) homomorphisms. These homomorphisms satisfy (i), (ii) and (iii) except for the fact that $ A $ and $ B $ being complete intersection does not imply that $ \HH_2( A, B, -)=0 $. It is, therefore, too restrictive of a class to be \emph{ci-adequate}.

	A second attempt, more suited to our purposes, would be the homomorphisms $ A \to B $ satisfying $ \HH_n( A, B, -)=0 $ for all $ n \geq 3 $. These homomorphisms clearly satisfy (i), as shown by the Jacobi-Zariski exact sequence and \cite[17.13]{An1974}. They also satisfy (ii) and one of the implications in (iii) ($ A $ Cohen-Macaulay $\Rightarrow$ $ B $ Cohen-Macaulay). These two claims were both proven in \cite[Corollary 6]{GarciaSoto} in the case $ A \to B $ is surjective (using the study of these homomorphisms carried out, in terms of Koszul homology, in \cite{RodicioHelv}, \cite{BlancoMajadasRodicioInventiones} and \cite{BlancoMajadasRodicioJPAA}) and were later extended to the general case in \cite{AvramovHenriquesSega}, where these homomorphisms are called \emph{quasi-complete intersection} (\emph{qci}) homomorphisms. Nevertheless, it is not known if this class is \emph{ci-adequate} or not: whether the other implication in (iii) holds is an open problem (it can be shown to be equivalent to \cite[Conjecture 4.2]{AvramovHenriquesSega}). Hence, this class of homomorphisms is, a priori, too big to be \emph{ci-adequate}.
	
	The aim of this chapter is to search for a \emph{ci-adequate} class of homomorphisms. As the previous attempts suggest, the desired class should fall in between \emph{ci} and \emph{qci} homomorphisms, while satisfying (i), (ii) and (iii). Due to the simplicity of the definitions of the aforementioned classes of homomorphisms, given by vanishing of André-Quillen homology, which has really good properties, the price to pay is clear: the sought after class will be harder to handle. We will study three classes of homomorphisms, which will be referred to as \emph{fci}, \emph{mci} and \emph{rci} and will be treated, respectively, in Sections $\S$\ref{sectionFCI}, $\S$\ref{sectionMCI} and $\S$\ref{sectionRCI}. They are also defined in terms of André-Quillen homology, they satisfy (i), (ii) and (iii) (except for the fact that \emph{fci} homomorphisms we require that $ B $ is an essentially of finite type algebra over $ A $ for the implication $ B $ Cohen-Macaulay $ \Rightarrow $ $ A $ Cohen-Macaulay), and they sit in between the classes of \emph{ci} and \emph{qci} homomorphisms, related in a way the diagram below (pertaining to Proposition \ref{relationRCIMCIFCI}) makes precise:
	\begin{center}
		\begin{tikzcd}
			& & mci & 
			\\ci & rci & & qci
			\\ & & fci &  
			
			\arrow[from=2-1, to=2-2, Rightarrow, shift left=1.5]
			\arrow[from=2-2, to=2-1, negated, Rightarrow, shift left=1.5]
			\arrow[from=2-2, to=1-3, Rightarrow, shift left=1.5]
			\arrow[from=1-3, to=2-2, negated, Rightarrow, shift left=1.5]
			\arrow[from=2-2, to=3-3, Rightarrow]
			\arrow[from=1-3, to=2-4, Rightarrow]
			\arrow[from=3-3, to=2-4, Rightarrow, shift left=1.5]
			\arrow[from=2-4, to=3-3, negated, Rightarrow, shift left=1.5]
		\end{tikzcd}
	\end{center}

	\section{Preliminaries} \label{preliminaries}
	All rings and algebras will be commutative. We briefly recall the definition of André-Quillen homology and, for the sake of clarity in the exposition of the following sections and ease of reference, also include some properties satisfied by the  André-Quillen homology modules. Denote by $\mathbb{L}_{B|A}$ the cotangent complex of an $A$-algebra $B$, and if $M$ is a $B$-module, by  $\HH_i(A,B,M)=\HH_i(\mathbb{L}_{B|A}\otimes_B M)$ the Andr\'e-Quillen homology modules \cite{An1974}, \cite{Quillen}. We state some of properties of these homology modules, as they will be used repeatedly:\\
	
	\begin{property}{Degree 0 homology}[\cite[6.3]{An1974}]{0DG}
		$\HH_0(A,B,M)=\Omega_{B|A}\otimes_B M$. In particular, $ \HH_0(A,B,M)=0 $ for $ A \to B $ surjective.\\
	\end{property} 
	
	\begin{property}{Degree 1 homology}[\cite[6.1]{An1974}]{1DG}
		If $B=A/I$, then $\HH_1(A,B,M)=I/I^2\otimes_B M$.\\
	\end{property}
	
	\begin{property}{Localization}[\cite[4.59, 5.27]{An1974}]{L}
		Let $f:A \rightarrow B$ be a ring homomorphism, $T$ a multiplicative subset of $B$, $S$ a multiplicative subset of $A$ such that $f(S)\subset T$, and $M$ a $B$-module. Then
		\[ T^{-1}\HH_n(A,B,M) = \HH_n(A,B,T^{-1}M)= \]
		\[ \HH_n(A,T^{-1}B,T^{-1}M) = \HH_n(S^{-1}A,T^{-1}B,T^{-1}M)  \]
	\end{property}
	
	\begin{property}{Base change}[\cite[4.54]{An1974}]{BC}
		Let  $A \to B$, $A \to C$ be ring homomorphisms such that $\Tor_i^A(B,C)=0$ for all $i > 0$, and let $M$ be a $B\otimes_AC$-module. Then
		\[\HH_n(A,B,M)=\HH_n(C,B\otimes_AC,M)\]
		for all $n$. \\
	\end{property}
	
	\begin{property}{Change of coefficients}[\cite[3.20]{An1974}]{CC}
		Let $B$ be an $A$-algebra, $C$ a $B$-algebra, $M$ a flat $C$-module. Then from the definition we obtain
		\[ \HH_n(A,B,M)=\HH_n(A,B,C) \otimes_C M \]
		for all $n$.\\
	\end{property}
	
	\begin{property}{Jacobi-Zariski exact sequence}[\cite[5.1]{An1974}]{JZ}
		If $A \rightarrow B \rightarrow C$ are ring homomorphisms and $M$ is a $C$-module, we have a natural exact sequence 
		
		\begin{align*}
			&\cdots \rightarrow \HH_n(A,B,M) \rightarrow \HH_n(A,C,M) \rightarrow \HH_n(B,C,M) \rightarrow
			\HH_{n-1}(A,B,M) \rightarrow \cdots 
			\\&\cdots \rightarrow \HH_0(A,B,M) \rightarrow \HH_0(A,C,M) \rightarrow \HH_0(B,C,M) \rightarrow 0
		\end{align*}
	\end{property}
	
	\begin{property}{Regular sequences}[\cite[6.25]{An1974}]{RS}
		Let $ \locala $ be a noetherian local ring and $ I $ an ideal of $ A $. The following are equivalent:
		\begin{enumerate}
			\item[(i)] $ I $ is generated by a regular sequence.
			\item[(ii)] $ \HH_n( A, A/I, M)=0 $ for all $ n \geq 2 $ and any $ A/I $-module $ M $.
			\item[(iii)] $ \HH_2( A, A/I, k)=0 $. \\
		\end{enumerate}
	\end{property}
	
	\begin{property}{Regularity}[\cite[6.26]{An1974}]{Reg}
		As an immediate consequence of \propref{RS}, we obtain the following characterization of regularity.\\
		Let $ \locala $ be a noetherian local ring. The following are equivalent:
		\begin{enumerate}
			\item[(i)] $ A $ is a regular local ring.
			\item[(ii)] $ \HH_n( A, k, M)=0 $ for all $ n \geq 2 $ and any $ k $-module $ M $.
			\item[(iii)] $ \HH_2( A, k, k)=0 $. \\
		\end{enumerate}
	\end{property}
	
	\begin{property}{Complete intersection}[\cite[6.27, 6.28]{An1974}]{CI}
		Again as a consequence of \propref{RS}, \propref{JZ} and \propref{C} below, but also by means of \cite[17.13]{An1974}, we obtain the following characterization of complete intersections.\\
		Let $ \locala $ be a noetherian local ring. The following are equivalent:
		\begin{enumerate}
			\item[(i)] $ A $ is complete intersection.
			\item[(ii)] $ \HH_n( A, k, M)=0 $ for all $ n \geq 3 $ and any $ k $-module $ M $.
			\item[(iii)] $ \HH_3( A, k, k)=0 $.
			\item[(iv)] $ \HH_4( A, k, k)=0 $. \\
		\end{enumerate}
	\end{property}

	\begin{property}{Field extensions}[\cite [7.4]{An1974}]{FEx}
		If $K\rightarrow L$ is a field extension and $M$ an $L$-module, we have $\HH_n(K,L,M)=0$ for all $n \geq 2$. So if $A\rightarrow K \rightarrow L$ are ring homomorphisms with $K$ and $L$ fields, from \propref{JZ} we obtain $\HH_n(A,K,L) = \HH_n(A,L,L)$ for all $n \geq 2$, which, using \propref{CC}, gives $\HH_n(A,K,K) \otimes_KL=\HH_n(A,L,L)$ for all $n \geq 2$.\\
	\end{property}
	
	\begin{property}{Completion}[\cite[10.18]{An1974}]{C}
		Let $ \locala $ be a noetherian local ring, $ (\widehat{A}, \widehat{\mathfrak{m}},k )$ its $ \mathfrak{m} $-adic completion and $ M $ a $ k $-module. Then there exist natural isomorphisms
		\[ \HH_n( A, k, M) = \HH_n( \widehat{A}, k, M) \]
		for all $ n \geq 0 $. As a consequence, \propref{JZ} yields immediately that
		\[ \HH_n( A, \widehat{A}, M) = 0 \]
		for all $ n \geq 0 $ and any $ k $-module $ M $. Moreover, for any local homomorphism $ \locala \to \localb $ of noetherian local rings and an $ l $-module $ M $, again using \propref{JZ} we obtain
		\[ \HH_n( A, B, M) = \HH_n( A, \widehat{B}, M) = \HH_n( \widehat{A}, \widehat{B}, M) \]
		for all $ n \geq 0 $.\\
	\end{property}
	
	Additionally, throughout this paper we will make frequent use of \emph{regular} and \emph{Cohen factorizations} of a local homomorphism of noetherian local rings. These were introduced by Avramov, Foxby and Herzog in \cite{AvramovFoxbyHerzog}. Let $ \locala \to \localb $ be a local homomorphism of noetherian local rings:
	\begin{enumerate}
		\item[(i)] A \emph{regular factorization} of $ f $ is a diagram $ A \xrightarrow{\tau} R \xrightarrow{\sigma} B $ such that $ \sigma \tau = f $, where $ \tau $ is a flat local homomorphism of noetherian local rings with regular closed fibre $ R \otimes_A k $, and $ \sigma $ is surjective.
		\item[(ii)] A \emph{Cohen factorization} of $ f $ is regular factorization $ A \xrightarrow{\tau} R \xrightarrow{\sigma} B $ of $ f $ where, moreover, $ R $ is a complete local ring.
	\end{enumerate}
	
	Cohen factorizations exist, as was shown in \cite[Theorem 1.1]{AvramovFoxbyHerzog}, whenever the target local ring is complete. That is, for any local homomorphism $f \colon \locala \to \localb $, there is a Cohen factorization of $ f' \colon A \to \widehat{B} $,
	\begin{center}
		\begin{tikzcd}
			& R & 
			\\A & & \widehat{B}
			
			\arrow[from=2-1, to=2-3, "f'"]
			\arrow[from=2-1, to=1-2, "\tau"]
			\arrow[from=1-2, to=2-3, "\sigma"]
		\end{tikzcd}
	\end{center}
	
	\section{Feebly complete intersection homomorphisms} \label{sectionFCI}
	
	\begin{defn}\label{defMCI}
		A local homomorphism $ f \colon \locala \to \localb $ of noetherian local rings is said to be \emph{feebly complete intersection} (\emph{fci} for short) if there exists a flat local homomorphism of noetherian local rings $ A \to A' $ and a local homomorphism of noetherian local rings $ Q \to A' $ such that there exists a maximal ideal $ \mathfrak{n}' $ of $ B \otimes_A A' $ contracting to the maximal ideals of $ A' $ and $ B $ satisfying
		\[ \HH_n(Q,A', l')=0=\HH_n(Q, B \otimes_A A', l') \]
		for all $ n \geq 2 $, where $ l'=(B \otimes_A A')/\mathfrak{n}' $.
	\end{defn}
	
	Note that there always exists a prime ideal of $ B \otimes_A A' $ contracting to the maximal ideals of $ A' $ and $ B $ (see for instance \cite[3.2.7.1 (i)]{EGAISpri}), and so too does such a maximal ideal exist.
	
	\begin{prop}\label{strictImplications}
		Let $ f \colon A \to B $  be a local homomorphism of noetherian local rings. There are strict implications
		\[ \HH_n(A,B, l)=0 \quad \forall \, n \geq 2 \Rightarrow f \text{ is \emph{fci}} \Rightarrow \HH_n(A,B, l)=0 \quad \forall \, n \geq 3 \]
		where $ l $ is the residue field of $ B $.
	\end{prop}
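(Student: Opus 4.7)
The plan is to handle each implication in turn using the properties listed in the preliminaries, and to defer strictness to the construction of specific examples.

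For the first implication, suppose $\HH_n(A,B,l)=0$ for all $n\geq 2$. I would take $A'=A$ (the identity as the required flat local homomorphism), $Q=A$, and $\mathfrak{n}'=\mathfrak{n}$, so that $B\otimes_A A'=B$ and $l'=l$. The vanishing $\HH_n(Q,A',l')=\HH_n(A,A,l)=0$ is automatic (from \propref{0DG} together with the standard fact that the cotangent complex $\mathbb{L}_{A|A}$ is acyclic), while $\HH_n(Q,B\otimes_A A',l')=\HH_n(A,B,l)=0$ for $n\geq 2$ is exactly the hypothesis. Hence $f$ is \emph{fci}.

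For the second implication, let $f$ be \emph{fci} with witnessing data $A\to A'$, $Q\to A'$, $\mathfrak{n}'\subset B\otimes_A A'$ and $l'=(B\otimes_A A')/\mathfrak{n}'$. The strategy is first to replace $l$ by $l'$, then invoke flat base change, then cut with Jacobi–Zariski, then descend back to $l$. Concretely, since $A\to A'$ is flat we have $\Tor_i^A(B,A')=0$ for $i>0$, so \propref{BC} gives
\[\HH_n(A,B,l')=\HH_n(A',B\otimes_A A',l')\]
for all $n$. Applying \propref{JZ} to $Q\to A'\to B\otimes_A A'$ with coefficients in $l'$, the relevant segment
\[\HH_n(Q,B\otimes_A A',l')\to \HH_n(A',B\otimes_A A',l')\to \HH_{n-1}(Q,A',l')\]
has its outer terms vanishing for $n\geq 3$ (the left by the \emph{fci} hypothesis, the right because $n-1\geq 2$). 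Hence $\HH_n(A,B,l')=0$ for $n\geq 3$. Finally, since $\mathfrak{n}'$ contracts to $\mathfrak{n}$, the induced map $l\to l'$ is a nonzero field extension, so \propref{CC} gives $\HH_n(A,B,l')=\HH_n(A,B,l)\otimes_l l'$, and faithful flatness of $l\to l'$ forces $\HH_n(A,B,l)=0$ for $n\geq 3$.

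The positive implications, as the above shows, are essentially formal combinations of \propref{BC}, \propref{JZ} and \propref{CC}, so the real content of the proposition lies in its strictness claim. The main obstacle is therefore to produce two examples: an \emph{fci} homomorphism with $\HH_2(A,B,l)\neq 0$ (witnessing that the extra flexibility provided by the auxiliary ring $Q$ genuinely enlarges the class of homomorphisms with $\HH_n(A,B,l)=0$ for $n\geq 2$), and a homomorphism with $\HH_n(A,B,l)=0$ for $n\geq 3$ that admits no such auxiliary factorization, so is not \emph{fci}. I would expect both to be constructed in the body of Section~\ref{sectionFCI} once the structural properties of \emph{fci} morphisms have been developed, rather than at this preliminary stage.
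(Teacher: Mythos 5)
Your two positive implications are correct and essentially identical to the paper's argument: for the first you take $A'=Q=A$ and $\mathfrak{n}'=\mathfrak{n}$, and for the second you combine the Jacobi--Zariski sequence for $Q\to A'\to B\otimes_A A'$ with flat base change and then descend from $l'$ to $l$ via \propref{CC} and faithful flatness of the field extension $l\to l'$; this is exactly the paper's route. The gap is that the strictness assertions are part of the statement, and you leave both of them entirely unproved, offering only the expectation that examples will appear later. For the failure of the converse to the first implication this is not acceptable, because the example is immediate and the paper gives it inside this very proof: take $A$ complete intersection but not regular and $B=k$ its residue field. Then $\HH_2(A,k,k)\neq 0$ by \propref{Reg}, yet $f$ is \emph{fci}: choose $A'=\widehat{A}$ and a regular local ring $Q$ with $\widehat{A}=Q/I$ (Cohen's structure theorem), so that $\HH_n(Q,\widehat{A},k)=0$ for $n\geq 2$ by \propref{RS} (the kernel is generated by a regular sequence since $\widehat A$ is complete intersection) and $\HH_n(Q,B\otimes_A\widehat{A},k)=\HH_n(Q,k,k)=0$ for $n\geq 2$ by \propref{Reg}.

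For the failure of the converse to the second implication, deferring is in line with the paper (it is settled only in Remark \ref{counterExampleH3fci}), but your plan names no mechanism by which a homomorphism with $\HH_n(A,B,l)=0$ for all $n\geq 3$ could be shown \emph{not} to be \emph{fci}, and such non-membership cannot be checked directly from the definition since one would have to rule out all auxiliary data $(A',Q)$. The obstruction the paper uses is finiteness of complete intersection dimension: Proposition \ref{fciKoszul} shows that a surjective \emph{fci} homomorphism has $\cidim_A(B)<\infty$, while \cite[Theorem 3.5]{AvramovHenriquesSega} provides a surjection with $\HH_n(A,B,-)=0$ for all $n\geq 3$ and $\cidim_A(B)=\infty$. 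Without identifying an invariant of this kind, the strictness half of your proposal has no content.
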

	\begin{proof}
		First, if $ \HH_n(A,B, l)=0 $ for $ n \geq 2 $, taking $ A' = A, Q=A $ one sees immediately that $ f $ is an \emph{fci} homomorphism. The converse is, however, not true, since taking $ A $ to be a complete intersection but not regular ring and $ B $ its residue field yields $ \HH_2(A,B,l) \neq 0 $, but taking the completion $ A'=\widehat{A} $ and $ Q $ a regular local ring such that $ \widehat{A} = Q/I $, one sees that $ f $ is \emph{fci} using \propref{RS} and \propref{Reg}: for all $ n \geq 2 $ 
		\[ \HH_n(Q, \widehat{A}, l)=0, \qquad  \HH_n(Q, B \otimes_A \widehat{A}, l)=\HH_n(Q, l, l)=0.\]
		
		Now assume that $ f $ is \emph{fci}. Let $ A \to A' $, $ Q \to A' $ and $ l' $ be as in Definition \ref{defMCI}. The Jacobi-Zariski exact sequence associated with $ Q \to A' \to B \otimes_A A' $ gives
		\[ \HH_n(A', B \otimes_A A', l')=0 \]
		for all $ n \geq 3 $. By flat base change,
		\[ \HH_n(A, B, l')=0 \quad \forall n  \geq 3\]
		and so, by \propref{CC} one obtains
		\[ \HH_n(A, B, l)=0 \quad \forall n  \geq 3. \]
		
		We defer to Remark \ref{counterExampleH3fci} the proof that the converse to this does not hold.
	\end{proof}
	
	\begin{rem}
		Let $ f \colon \locala \to \localb $ be a local homomorphism of noetherian local rings. If $ f $ is \emph{fci}, then the homomorphism
		\[ \gamma_n \colon \HH_n(A, k, l) \to \HH_n(B, l, l) \]
		is an isomorphism for all $ n \geq 3 $. Indeed, the vanishing of $ \HH_n(A,B,l) $ for $ n\geq 3 $ is tantamount to $ \gamma_n $ being an isomorphism for $ n \geq 3 $: first, recall that there is an isomorphism $ \HH_n( A, k,l ) = \HH_n( A, l, l) $ given by \propref{FEx}. Now, necessity follows directly from the Jacobi-Zariski exact sequence associated with $ A \to B \to l $, while sufficiency requires a little more care. Namely, Jacobi-Zariski allows one to conclude that $ \gamma_n $ is an isomorphism for $ n \geq 4 $, but for $ n=3 $ consider the exact sequence
		\[ 0=\HH_3( A, B, l) \to \HH_3( A, l, l) \xrightarrow{\gamma_3} \HH_3( B, l, l) \xrightarrow{\alpha} \HH_2( A, B, l) \to \HH_2( A, l, l), \]
		in which we need only show $\alpha=0$. Take a regular factorization $ A \to R \to \widehat{B} $ and observe that by \cite[Lemma 19]{MajadasNagoya} one has $ \HH_n( A, l, l) = \HH_n( R, l, l) $ and $ \HH_n( A, B, l) = \HH_n( R, \widehat{B}, l) $ for $ n \geq 2 $. Thus the above exact sequence is isomorphic to the Jacobi-Zariski exact sequence associated with $ R \to \widehat{B} \to l $, so the desired result is just \cite[Corollary 6]{RodicioJPAA}.

		However, it is possible to give a more elementary proof of the fact that $ f $ being \emph{fci} $\Rightarrow$ $ \gamma_n $ is an isomorphism for $ n \geq 3 $ without needing to use \cite[Corollary 6]{RodicioJPAA}. For $ n \geq 3 $ we have a commutative diagram with exact rows 
		\begin{center}
			\begin{tikzcd}[column sep=small, scale cd=0.85]
				0=\HH_n(Q,A',l') \arrow[r] & \HH_n(Q,l',l') \arrow[r] & \HH_n(A',l',l') \arrow[r] & \HH_{n-1}(Q,A',l')=0
				\\ 0=\HH_n(Q,B \otimes_A A',l') \arrow[r] & \HH_n(Q,l',l') \arrow[r] & \HH_n(B \otimes_A A',l',l') \arrow[r] & \HH_{n-1}(Q,B \otimes_A A',l')=0
				
				\arrow[from=1-2, to=2-2, equal]
				\arrow[from=1-3, to=2-3]
			\end{tikzcd}
		\end{center}
		by naturality of the Jacobi-Zariski exact sequence. This shows that the homomorphism
		\[ \alpha_n \colon \HH_n(A',l',l') \to \HH_n(B \otimes_A A', l',l') \]
		is an isomorphism for $ n \geq 3 $.
		
		Similarly, we can obtain a commutative diagram with exact rows 
		\begin{center}
			\begin{tikzcd}[column sep=small, scale cd=0.72]
				\HH_{n+1}(A',l',l') \arrow[r] & \HH_n(A,A',l') \arrow[r] & \HH_n(A,l',l') \arrow[r] & \HH_n(A',l',l') \arrow[r] & \HH_{n-1}(A,A',l')
				\\ \HH_{n+1}(B \otimes_A A', l', l') \arrow[r] & \HH_n(B ,B \otimes_A A',l') \arrow[r] & \HH_n(B,l',l') \arrow[r] & \HH_n(B \otimes_A A', l',l') \arrow[r] & \HH_{n-1}(B,B \otimes_A A',l')
				
				\arrow[from=1-1, to=2-1, "\alpha_{n+1}"]
				\arrow[from=1-2, to=2-2, "\beta_n"]
				\arrow[from=1-3, to=2-3, "\gamma_n'"]
				\arrow[from=1-4, to=2-4, "\alpha_n"]
				\arrow[from=1-5, to=2-5, "\beta_{n-1}"]
			\end{tikzcd}
		\end{center}
		where $ \alpha_n $ is an isomorphism for all $ n \geq 3 $ and by flat base change $ \beta_n $ is an isomorphism for all $ n $. Hence, $ \gamma_n' $ is an isomorphism for $ n \geq 3 $ and, since
		\begin{align*}
			\HH_n(A,k,l) \otimes_l l' &= \HH_n(A,l',l)
			\\ \HH_n(B,l,l) \otimes_l l' &= \HH_n(B,l',l')
		\end{align*}
		for $ n \geq 2 $, we can conclude.
	\end{rem}
	
	\begin{lem}\label{vanishingRegularCI}
		Let $ f \colon A \to B $ be a local homomorphism of noetherian local rings.
		\begin{enumerate}
			\item[(i)] If $ A $ is regular and $ B $ is complete intersection, then $ \HH_n(A,B, -)=0 $ for all $ n \geq 2 $.
			\item[(ii)] If $ A $ and $ B $ are complete intersections, then $ \HH_n(A,B, -)=0 $ for all $ n \geq 3 $.
		\end{enumerate}
	\end{lem}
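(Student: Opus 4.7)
My plan is to prove both parts by means of a Cohen factorization $A \xrightarrow{\tau} R \xrightarrow{\sigma} \widehat{B}$ of $A \to \widehat{B}$, treating the flat map $\tau$ and the surjection $\sigma$ separately, and transferring from $\widehat{B}$ back to $B$ via \propref{C}. The fibre criteria for regularity and for complete intersection, applied to $\tau$ with regular closed fibre, yield that $R$ is regular in case (i) and a complete intersection in case (ii).

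For the surjection $\sigma$, in case (i) the Jacobi-Zariski sequence for $R \to \widehat{B} \to l$ combined with \propref{Reg} (which kills $\HH_n(R,l,l)$ for $n \geq 2$) and \propref{CI} (which kills $\HH_n(\widehat{B}, l, l)$ for $n \geq 3$) forces $\HH_2(R, \widehat{B}, l) = 0$; then \propref{RS} upgrades this to $\HH_n(R, \widehat{B}, -) = 0$ for all $n \geq 2$ and all modules, equivalently, $\ker \sigma$ is generated by a regular sequence. In case (ii), the analogous Jacobi-Zariski argument gives $\HH_n(R, \widehat{B}, l) = 0$ for $n \geq 3$; since $\sigma$ is now a surjection between complete intersection rings, one appeals to the quasi-complete intersection result of Avramov-Henriques-Sega cited in the introduction to obtain $\HH_n(R, \widehat{B}, -) = 0$ for $n \geq 3$ on all modules.

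For the flat map $\tau$, I would seek $\HH_n(A, R, -) = 0$ for $n \geq 2$. The Jacobi-Zariski sequence for $A \to R \to l_R$ (with $l_R$ the residue field of $R$), combined with \propref{Reg} or \propref{CI} applied to $R$ and \propref{FEx} collapsing $\HH_n(A, l_R, -)$ to $\HH_n(A, k, k) \otimes_k l_R$ (which vanishes because $A$ is regular or complete intersection in the respective cases), immediately yields the vanishing on residue-field coefficients. The main obstacle, which I expect to be the genuinely hard step, is upgrading this to arbitrary $R$-modules; a natural route, paralleling the definition of \emph{fci} homomorphisms, is to replace $\tau$ by an appropriate flat base change making the closed fibre geometrically regular, so that the extended map becomes formally smooth with vanishing André-Quillen homology in all positive degrees, and then descend via \propref{BC} and \propref{CC}. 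Feeding both vanishings into the Jacobi-Zariski sequence for $A \to R \to \widehat{B}$ then gives $\HH_n(A, \widehat{B}, -) = 0$ for $n \geq 2$ in case (i) and for $n \geq 3$ in case (ii), and \propref{C} delivers the conclusion for $B$.
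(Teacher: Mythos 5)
There is a genuine gap: the lemma asserts $\HH_n(A,B,-)=0$, i.e.\ vanishing with \emph{arbitrary} coefficient modules, while your architecture can only deliver vanishing with coefficients in the closed-point residue field $l$. Two steps break down. First, the final transfer from $\widehat{B}$ back to $B$: \propref{C} gives $\HH_n(A,B,M)=\HH_n(A,\widehat{B},M)$ only for $M$ an $l$-module; for a general $B$-module $M$ you would need $\HH_n(B,\widehat{B},M)=0$, which is false in general (for non-excellent $B$ the formal fibres are not geometrically regular, so already $\HH_1$ or $\HH_2$ of $B\to\widehat{B}$ is nonzero with suitable coefficients), so all-coefficients vanishing cannot be descended along completion. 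Second, the flat step: you need $\HH_n(A,R,M)=0$ for all $R$-modules $M$ knowing only that the closed fibre is regular; your sketch (flat base change to make the closed fibre geometrically regular, then formal smoothness and descent) is exactly the hard content of Andr\'e's localization theorems and is not carried out, and even granting it, the Jacobi--Zariski sequence for $A\to R\to\widehat{B}$ only accepts $\widehat{B}$-module coefficients, so you are returned to the first problem. (The surjective piece of your plan is fine: \propref{RS} in case (i), and in case (ii) the qci results do upgrade $\HH_n(R,\widehat{B},l)=0$ for $n\ge 3$ to all coefficients.)

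The paper's proof avoids Cohen factorizations and completion altogether, precisely because of this coefficient issue: by Andr\'e [Suppl\'ement, Proposition 29], the vanishing of $\HH_n(A,B,-)$ in a range of degrees is detected on the residue fields $k(\mathfrak{q})$ of \emph{all} primes $\mathfrak{q}$ of $B$; localization \propref{L} reduces to $A_{\mathfrak{p}}\to B_{\mathfrak{q}}$ with $\mathfrak{p}=f^{-1}(\mathfrak{q})$, Serre's theorem (resp.\ Avramov's results) gives that $A_{\mathfrak{p}}$ is regular (resp.\ complete intersection) and $B_{\mathfrak{q}}$ is complete intersection, and a three-term Jacobi--Zariski segment such as $\HH_{n+1}(B_{\mathfrak{q}},k(\mathfrak{q}),k(\mathfrak{q}))\to\HH_n(A_{\mathfrak{p}},B_{\mathfrak{q}},k(\mathfrak{q}))\to\HH_n(A_{\mathfrak{p}},k(\mathfrak{q}),k(\mathfrak{q}))$ finishes. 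To repair your argument you would have to run it at every prime of $B$, which the completion step does not permit; either adopt this reduction to residue fields at all primes, or weaken the claim you prove to coefficients in $l$ only.
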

	\begin{proof}
		(i). By \cite[Supplément, Proposition 29]{An1974}, it suffices to show that for all prime ideals $ \mathfrak{q} $ of $  B $,
		\[ \HH_n(A,B, k(\mathfrak{q}))=0 \]
		for all $ n \geq 2 $, where $ k(\mathfrak{q}) $ is the residue field of $ B_{\mathfrak{q}} $. Taking $ \mathfrak{p} \eqdef f^{-1}(\mathfrak{q}) $ one has $ \HH_n(A, B, k(\mathfrak{q})) = \HH_n(A_{\mathfrak{p}}, B_{\mathfrak{q}}, k(\mathfrak{q})) $ by \propref{L}. Now, Serre's theorem gives that $ A_{\mathfrak{p}} $ is a regular local ring, while \cite{AvramovCI} shows that $ B_\mathfrak{q} $ is complete intersection. In the Jacobi-Zariski exact sequence
		\[ \HH_3(B_\mathfrak{q}, k(\mathfrak{q}), k(\mathfrak{q})) \to \HH_2(A_\mathfrak{p}, B_\mathfrak{q}, k(\mathfrak{q})) \to \HH_2(A_\mathfrak{p}, k(\mathfrak{q}), k(\mathfrak{q}))\]
		the leftmost term vanishes, $ B_{\mathfrak{q}} $ being complete intersection \propref{CI}. The rightmost term, $ \HH_2(A_\mathfrak{p}, k(\mathfrak{q}), k(\mathfrak{q})) = \HH_2(A_\mathfrak{p}, k(\mathfrak{p}), k(\mathfrak{p})) \otimes_{k(\mathfrak{p})} k(\mathfrak{q})$, also vanishes, since $ A_{\mathfrak{p}} $ is regular \propref{Reg}. Thus
		\[ \HH_n(A_\mathfrak{p}, B_{\mathfrak{q}}, k(\mathfrak{q}))=0 \]
		for all $ n \geq 2 $ as desired.
		
		The proof of (ii) is similar to that of (i), now using the exact sequence
		\[  \HH_4(B_\mathfrak{q}, k(\mathfrak{q}), k(\mathfrak{q})) \to \HH_3(A_\mathfrak{p}, B_\mathfrak{q}, k(\mathfrak{q})) \to \HH_3(A_\mathfrak{p}, k(\mathfrak{q}), k(\mathfrak{q})). \]
	\end{proof}
	
	\begin{thm} \label{2of3}
		Let $ f \colon \locala \to \localb $ be a local homomorphism of noetherian local rings. Two out of the following conditions imply the third:
		\begin{enumerate}
			\item[(i)] $ f $ is \emph{fci}.
			\item[(ii)] $ A $ is a complete intersection ring.
			\item[(ii)] $ B $ is a complete intersection ring.
		\end{enumerate}
		
		In particular, a noetherian local ring $ \locala $ is complete intersection if and only if the homomorphism $ A \to k $ is \emph{fci}.
	\end{thm}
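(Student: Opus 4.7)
The plan is to treat each implication in the ``two of three'' claim separately. For \emph{(i)+(ii)}$\Rightarrow$\emph{(iii)} and \emph{(i)+(iii)}$\Rightarrow$\emph{(ii)}, I would apply \propref{JZ} to $A\to B\to l$ at degree $n=4$: Proposition \ref{strictImplications} gives $\HH_n(A,B,l)=0$ for $n\geq 3$, so the sequence collapses to an isomorphism $\HH_4(A,l,l)\cong\HH_4(B,l,l)$. By \propref{FEx}, $\HH_4(A,l,l)=\HH_4(A,k,k)\otimes_k l$, and since $k\to l$ is faithfully flat this vanishes if and only if $\HH_4(A,k,k)$ does. Combined with \propref{CI}(iv), this shows that under (i), conditions (ii) and (iii) pass to one another.

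For \emph{(ii)+(iii)}$\Rightarrow$\emph{(i)}, I would construct explicit \emph{fci} data from a Cohen factorization $\widehat{A}\xrightarrow{\tau}R\xrightarrow{\sigma_0}\widehat{B}$ of $\widehat{A}\to\widehat{B}$, setting $A'=R$ (flat over $A$ via $A\to\widehat{A}\to R$). First I would argue $R$ is itself CI: flat base change along the flat $\widehat{A}\to R$ together with \propref{JZ} for $R\to R/\widehat{\mathfrak{m}}R\to l$ and the regularity of the closed fiber (\propref{Reg}) exhibit $\HH_4(R,l,l)$ as a quotient of $\HH_4(\widehat{A},k,l)=\HH_4(A,k,k)\otimes_k l$ (using \propref{C} and \propref{CC}), which vanishes since $A$ is CI (\propref{CI}(iv)). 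By Cohen structure write $R=Q/I$ with $Q$ complete regular local; since $R$ is CI the ideal $I$ is generated by a regular sequence, so $\HH_n(Q,R,-)=0$ for $n\geq 2$ by \propref{RS}. The surjection $\sigma\colon B\otimes_A R\to\widehat{B}$, $b\otimes r\mapsto b\cdot\sigma_0(r)$, then yields a maximal ideal $\mathfrak{n}'\eqdef\sigma^{-1}(\widehat{\mathfrak{n}})$ of $B\otimes_A R$ with residue field $l$, contracting to the maximal ideals of $R$ and $B$.

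The main technical point is to verify $\HH_n(Q,B\otimes_A R,l)=0$ for $n\geq 2$, which by \propref{L} equals $\HH_n(Q,C,l)$ for $C\eqdef(B\otimes_A R)_{\mathfrak{n}'}$. I would first show $C$ is CI: Lemma \ref{vanishingRegularCI}(ii) yields $\HH_n(A,B,l)=0$ for $n\geq 3$, and flat base change along $A\to R$ combined with \propref{L} rewrites this vanishing as $\HH_n(R,C,l)=0$ for $n\geq 3$; Jacobi-Zariski for $R\to C\to l$ at $n=4$ together with $R$ being CI then forces $\HH_4(C,l,l)=0$. Finally, Jacobi-Zariski for $Q\to C\to l$ combined with $Q$ regular and $C$ CI immediately gives $\HH_m(Q,C,l)=0$ for all $m\geq 2$, as required. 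The ``in particular'' statement then follows from the theorem applied to $B=k$. The hardest step will be orchestrating these Jacobi-Zariski chases to transfer the CI property through the flat extension $A\to R$ and into the localized tensor product $C$, which is precisely where Lemma \ref{vanishingRegularCI}(ii) is indispensable.
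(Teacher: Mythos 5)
Your handling of the ``(i) plus one of (ii)/(iii) implies the other'' direction is fine and is essentially the paper's argument (Proposition \ref{strictImplications} plus \propref{JZ} in degree $4$ and \propref{CI}(iv)). The gap is in (ii)+(iii) $\Rightarrow$ (i), at the step where you declare that $C=(B\otimes_A R)_{\mathfrak{n}'}$ is a complete intersection ring and then use \propref{CI} to pass from $\HH_4(C,l,l)=0$ to $\HH_m(C,l,l)=0$ for all $m\geq 3$ (and hence to $\HH_m(Q,C,l)=0$ for $m\geq 2$). That characterization is a theorem about \emph{noetherian} local rings, and $B\otimes_A R$ and its localization $C$ need not be noetherian: already for $A$ a non-complete regular local ring (say $A=k[x]_{(x)}$) and $B=\widehat{A}$, one may take $R=\widehat{A}$, and then the kernel $J$ of $C\to\widehat{B}$ satisfies $J/J^2\cong\Omega_{\widehat{A}|A}$, which is not a finitely generated $\widehat{A}$-module, so $C$ is not noetherian. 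Without the noetherian rigidity you only get what your Jacobi-Zariski chases actually give: $\HH_m(C,l,l)=0$ for $m\geq 4$, while $\HH_3(C,l,l)$ merely injects into $\HH_2(R,C,l)=\HH_2(A,B,l)$, which is nonzero in general (e.g.\ $B=k$, $A$ complete intersection but not regular). Consequently you obtain $\HH_n(Q,B\otimes_A R,l)=0$ only for $n\geq 3$, and Definition \ref{defMCI} (even in the relaxed form of Lemma \ref{LemmaEquivDef}(iii)) requires the degree-$2$ vanishing for the tensor product; that is exactly the point your argument does not reach.

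For comparison, the paper avoids this entirely: it takes $A'=\widehat{A}$ and $Q$ regular with $\widehat{A}=Q/I$, and proves $\HH_n(B\otimes_A\widehat{A},l',l')=0$ for $n\geq 3$ using Andr\'e's exact sequence \cite[5.21]{An1974} for the tensor product, in which the complete intersection hypothesis is only ever applied to the noetherian local rings $B$ and $\widehat{A}$, together with the injectivity of $\HH_{n-1}(A,l',l')\to\HH_{n-1}(\widehat{A},l',l')$ coming from \propref{C}; then \propref{JZ} with $Q$ regular gives the needed vanishing in all degrees $n\geq 2$, with no noetherianness of $B\otimes_A\widehat{A}$ required. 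Your construction (with $A'=R$ from a Cohen factorization, which is otherwise well set up, including the clean choice $\mathfrak{n}'=\sigma^{-1}(\widehat{\mathfrak{n}})$) could be repaired the same way, by running the \cite[5.21]{An1974} sequence for $B\otimes_A R$ and using injectivity of $\HH_{n-1}(A,l,l)\to\HH_{n-1}(R,l,l)$, but not by asserting that $C$ is a complete intersection ring.
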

	\begin{proof}
		If $ f $ is \emph{fci}, by Proposition \ref{strictImplications} $ \HH_n(A,B,l)=0 $ for all $ n \geq 3 $, so the exact sequence
		\[  \HH_4(A, B, l) \to \HH_4(A, l, l) \to \HH_4(B, l, l) \to \HH_3(A, B, l)  \]
		shows that $ A $ is complete intersection if and only if $ B $ is, since a local ring $ (R,l) $ is complete intersection if and only if $ \HH_4(R, l, l)=0 $ by \propref{CI}.
		
		Assume now that both $ A $ and $ B $ are complete intersection. Let $  Q  $ be a regular local ring such that $ \widehat{A} = Q/I $. Lemma \ref{vanishingRegularCI} (or \propref{RS}) yields
		\[ \HH_n(Q, \widehat{A}, -)=0 \]
		for all $ n \geq 2 $. Let $ \mathfrak{n}' $ be a maximal ideal of $ B \otimes_A \widehat{A} $ contracting to the maximal ideals of $ \widehat{A} $ and $ B $, and set $ l'=(B \otimes_A \widehat{A})/\mathfrak{n}' $. We have to show that
		\[ \HH_n(Q, B \otimes_A \widehat{A}, l')=0 \]
		for all $ n \geq 2 $.
		
		By \cite[5.21]{An1974} we have an exact sequence
		\begin{align*}
			\HH_n( B, l', l') &\oplus \HH_n( \widehat{A}, l', l') \to \HH_n( B \otimes_A \widehat{A}, l', l') 
			\\ &\to \HH_{n-1}( A, l', l') \to \HH_{n-1}( B, l', l') \oplus \HH_{n-1}( \widehat{A}, l', l')
		\end{align*}
		in which the leftmost term vanishes for all $ n \geq 3 $ since $ B $ and $ \widehat{A} $ are complete intersection \propref{CI}. Additionally, the homomorphism
		\[ \HH_{n-1}( A, l', l') \to \HH_{n-1}( \widehat{A}, l', l') \]
		is injective (it is, in fact, an isomorphism \propref{C}). Therefore, we obtain
		\[ \HH_n( B \otimes_A \widehat{A}, l', l')=0 \quad \forall n \geq 3. \]
		Now, regularity of $ Q $ means $ \HH_n( Q, l', l')=0 $ for $ n \geq 2 $, so the Jacobi-Zariski exact sequence associated with $ Q \to B \otimes_A \widehat{A} \to l' $ gives
		\[ \HH_n( Q, B \otimes_A \widehat{A}, l')=0 \]
		for all $ n \geq 2 $.
	\end{proof}
	
	\begin{lem} \label{LemmaEquivDef}
		Let $ f \colon \locala \to \localb $ be a local homomorphism of noetherian local rings. The following are equivalent:
		\begin{enumerate}
			\item[(i)] $ f $ is \emph{fci}
			\item[(ii)] There exists a flat and local homomorphism of noetherian local rings $ A \to A' $, a \emph{surjective} homomorphism of noetherian local rings $ Q \to A' $ and a maximal ideal $ \mathfrak{n}' $ of  $ B \otimes_A A' $ contracting to the maximal ideals of $ A' $ and $ B $ satisfying, for all $ n \geq 2 $,
			\[ \HH_n( Q, A', l')=0=\HH_n( Q, B \otimes_A A', l') \]
			where $ l' = (B \otimes_A A')/\mathfrak{n}' $
			\item[(iii)] There exists a flat and local homomorphism of noetherian local rings $ A \to A' $, a local homomorphism of noetherian local rings $ Q \to A' $ and a maximal ideal $ \mathfrak{n}' $ of  $ B \otimes_A A' $ contracting to the maximal ideals of $ A' $ and $ B $ satisfying, for $ l' = (B \otimes_A A')/\mathfrak{n}' $,
			\begin{align*}
				\HH_n( Q, A', l')=0 \quad \text{for all } n \geq 3
				\\ \HH_n( Q, B \otimes_A A', l')=0 \quad \text{for all } n \geq 2
			\end{align*}
		\end{enumerate}
	\end{lem}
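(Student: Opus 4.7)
The implications (ii)$\Rightarrow$(i) and (i)$\Rightarrow$(iii) are immediate from inspection: (ii) is the special case of (i) where additionally $Q\to A'$ is surjective, while (iii) simply relaxes the vanishing range for $\HH_n(Q,A',l')$ from $n\geq 2$ to $n\geq 3$. The content of the lemma is therefore the reverse implication (iii)$\Rightarrow$(ii), which I plan to establish via a Cohen factorization argument.

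Starting from data $(A\to A',\,Q\to A',\,\mathfrak{n}')$ satisfying (iii), the first step is to reduce to the case where the middle ring is complete. Replacing $A'$ by its $\mathfrak{m}_{A'}$-adic completion $\widehat{A'}$, the composition $A\to A'\to\widehat{A'}$ remains flat and local, and since $A'\to\widehat{A'}$ is faithfully flat there is a maximal ideal $\mathfrak{n}''$ of $B\otimes_A\widehat{A'}$ lying over $\mathfrak{n}'$. Setting $l''=(B\otimes_A\widehat{A'})/\mathfrak{n}''$, I would then combine \propref{C} with \propref{BC} along the flat map $A'\to\widehat{A'}$, together with \propref{JZ} and \propref{CC}, to transfer the vanishing conditions of (iii) to
\[\HH_n(Q,\widehat{A'},l'')=0\ \text{for}\ n\geq 3,\qquad \HH_n(Q,B\otimes_A\widehat{A'},l'')=0\ \text{for}\ n\geq 2.\]

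Next, applying the Avramov--Foxby--Herzog existence theorem to the local homomorphism $Q\to\widehat{A'}$ produces a Cohen factorization $Q\to R\to\widehat{A'}$ with $R$ complete, $Q\to R$ flat local with regular closed fibre, and $R\to\widehat{A'}$ surjective; the pair $(R,\widehat{A'})$ will be my candidate data for (ii). Using \propref{BC} to reduce $\HH_n(Q,R,l'')$ to the closed-fibre homology $\HH_n(k_Q,R/\mathfrak{m}_Q R,l'')$, and invoking \propref{Reg}, \propref{FEx} and \propref{JZ} to account for the residue field extension encoded in the Cohen factorization, one sees that $\HH_n(Q,R,l'')=0$ for $n\geq 2$. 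Feeding this into the Jacobi-Zariski sequences for $Q\to R\to\widehat{A'}$ and $Q\to R\to B\otimes_A\widehat{A'}$ yields immediately the $n\geq 3$ half of both vanishing statements required in (ii).

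The main obstacle will be the degree $n=2$ case, which the two sequences above do not settle on their own: the weaker hypothesis of (iii) at $n=2$ on the $\HH_n(Q,A',l')$ side blocks a direct deduction. I plan to handle this by bringing in a third Jacobi-Zariski sequence, the one for $R\to\widehat{A'}\to B\otimes_A\widehat{A'}$: by \propref{BC} along $A\to\widehat{A'}$, its middle term reads $\HH_n(\widehat{A'},B\otimes_A\widehat{A'},l'')=\HH_n(A,B,l'')$, which vanishes for $n\geq 3$ by Proposition \ref{strictImplications}. Cross-referencing this with the vanishing of $\HH_n(R,B\otimes_A\widehat{A'},l'')$ for $n\geq 3$ established above, and with the $n=2$ segment of the Jacobi-Zariski sequence for $Q\to R\to B\otimes_A\widehat{A'}$, I expect the degree 2 obstructions on the $\widehat{A'}$ and $B\otimes_A\widehat{A'}$ sides to propagate to each other, ultimately yielding $\HH_2(R,\widehat{A'},l'')=0=\HH_2(R,B\otimes_A\widehat{A'},l'')$ and thereby (ii) with $Q'=R$ and $A''=\widehat{A'}$.
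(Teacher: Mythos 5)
Your reduction steps and the decision to prove (iii)$\Rightarrow$(ii) directly (rather than the paper's route through (i)) are fine in outline, and the first half of your argument --- completing $A'$, taking a regular factorization $Q\to R\to\widehat{A'}$, and transferring the vanishing in degrees $\ge 3$ --- matches the paper's reduction. One caveat there: to move homology across $Q\to R$ in degrees $\ge 2$ you need \cite[Lemma 1.7]{AvramovAnnals} or \cite[Lemma 19]{MajadasNagoya} rather than \propref{BC} plus \propref{Reg}, since a regular closed fibre need not be geometrically regular and so $\HH_2(Q,R,l'')$ need not vanish; this is repairable by citing the right comparison lemma.

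The genuine gap is in your handling of $n=2$, and it is a circularity rather than an imprecision. You propose to kill $\HH_2(R,\widehat{A'},l'')$ using the Jacobi--Zariski sequence of $R\to\widehat{A'}\to B\otimes_A\widehat{A'}$ together with the claim that $\HH_3(\widehat{A'},B\otimes_A\widehat{A'},l'')=\HH_3(A,B,l'')$ vanishes ``by Proposition \ref{strictImplications}''. But that proposition's conclusion ($\HH_n(A,B,l)=0$ for $n\ge 3$) is derived from $f$ being \emph{fci}, i.e.\ from the full vanishing $\HH_n(Q,A',l')=0$ for $n\ge 2$; under hypothesis (iii) you only have it for $n\ge 3$, and the Jacobi--Zariski sequence of $Q\to A'\to B\otimes_A A'$ then gives $\HH_3(A',B\otimes_A A',l')\cong\HH_2(Q,A',l')$ --- precisely the group you are trying to annihilate. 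Equivalently, the sequence you invoke collapses to $0\to\HH_3(\widehat{A'},B\otimes_A\widehat{A'},l'')\to\HH_2(R,\widehat{A'},l'')\to 0$, so your two ``degree 2 obstructions'' are one and the same group and cannot be played off against each other. The missing ingredient is a rigidity input from outside the Jacobi--Zariski formalism: the paper uses \cite[Corollary 6]{RodicioJPAA}, which says that for the surjection $R\to\widehat{A'}$ with $\HH_3(R,\widehat{A'},l'')=0$ the natural map $\HH_2(R,\widehat{A'},l'')\to\HH_2(R,l'',l'')$ is injective; since that map factors through $\HH_2(R,B\otimes_A\widehat{A'},l'')=0$ by functoriality, the degree-2 vanishing follows. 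Without this (or an equivalent substitute) your argument does not close.
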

	\begin{proof}
		(i) $ \Rightarrow $ (ii). Let $ A \to A' $, $ Q \to A' $ and $ l' $ be as in Definition \ref*{defMCI}. Let $ Q \to Q' \to \widehat{A'} $ be a regular factorization, that is, $ Q \to Q' $ is a flat local homomorphism with regular closed fibre and $ Q' \to \widehat{A'} $ is surjective. 
		By \cite[Lemma 1.7]{AvramovAnnals}, we have
		\[ \HH_n( Q', \widehat{A'}, l'') = \HH_n( Q, A', l'')=0 \quad \text{for all } n \geq 2,\]
		where $ l'' $ is the residue field at a maximal ideal of $ B \otimes_A \widehat{A'} $ contracting to the ideal $ \mathfrak{n}' $ of $ B \otimes_A A' $ and the maximal ideal of $ \widehat{A'} $ (this ideal exists, for instance, by applying \cite[3.2.7.1 (i)]{EGAISpri} to the following pushout diagram
		\begin{center}
			\begin{tikzcd}
				A \arrow[r] \arrow[d] & A' \arrow[r] \arrow[d] & \widehat{A'} \arrow[d]
				\\B \arrow[r] & B \otimes_A A' \arrow[r] & B \otimes_A \widehat{A'}
			\end{tikzcd}
		\end{center}
		since the left square yields, after taking a maximal ideal, the $\mathfrak{n}'$ from Definition \ref{defMCI}, and then the right square gives a prime lying over the maximal ideal $\widehat{\mathfrak{m'}}$ of $ \widehat{A'} $ and $\mathfrak{n}'$, so it suffices again to take a maximal ideal containing it). Furthermore, since $ A' \to \widehat{A'} $ is flat, by base change \propref{BC} and \propref{C} one gets
		\[ \HH_n( B \otimes_A A', B \otimes_A \widehat{A'}, l'')=\HH_n( A', \widehat{A'}, l'') =0 \quad \forall n \geq 2 \]
		Thus, from the Jacobi-Zariski exact sequence associated with $ Q \to B \otimes_A A' \to B \otimes_A \widehat{A'} $,
		\[ \HH_n( Q, B \otimes_A \widehat{A'}, l'')=0 \quad \forall n \geq 2. \]
		
		Lastly, using \cite[Lemma 19]{MajadasNagoya},
		\[ \HH_n( Q', B \otimes_A \widehat{A'}, l'')=\HH_n( Q, B \otimes_A \widehat{A'}, l'')=0 \]
		so replacing $ A' $ by $ \widehat{A'} $ and $ Q $ by $ Q' $ we obtain the desired result.
		
		(iii) $ \Rightarrow $ (i). Let $ A \to A' $, $ Q \to A' $ and $ l' $ be as in (iii). By the same reasoning as in the proof of (i) $ \Rightarrow $ (ii), we can assume $ Q \to A' $ to be surjective. It is then enough to show that $ \HH_2( Q, A', l')=0 $.
		
		Since $ \HH_3( Q, A', l')=0 $, we have an injective homomorphism
		\[ \HH_2( Q, A', l') \to \HH_2( Q, l', l') \]
		by \cite[Corollary 6]{RodicioJPAA}. But this homomorphism factors through $ \HH_2( Q, B \otimes_A A', l')=0 $, so we get 
		\[ \HH_2( Q, A', l')=0. \]
		
		Both (ii) $ \Rightarrow $ (i) and (i) $ \Rightarrow $ (iii) are trivial.
	\end{proof}
	
	\begin{rem}
		Let $ f \colon \locala \to (B, \mathfrak{n}, k) $ be a surjective homomorphism of noetherian local rings. Let $ I = \ker(f) $ and $\HH^{Kos}_1(I) $ the first Koszul homology module associated with a set of generators of I. Note that the fact that $\HH^{Kos}_1(I) $ is free as a $ B $-module is does not depend on the choice of the set of generators of $ I $, as shown for instance in \cite[Prop. 1.6.21]{BruHer}, since that proof can be extended to non-necessarily finite sets of generators. Indeed, let $ \{x_u\}_{u \in U} $, $ \{x'_w\}_{w \in W} $ be sets of generators of $ I $,
		\begin{align*}
			K =& A \langle\{X_u\}_{u \in U} ; dX_u = x_u\rangle
			\\\bar{K} =& A \langle\{X_u\}_{u \in U} \cup \{X'_w\}_{w \in W} ; dX_u = x_u, dX'_w = x'_w\rangle
		\end{align*}
		the associated Koszul complexes, and
		\[ \widetilde{K} = A \langle\{X_u\}_{u \in U} \cup \{X'_w\}_{w \in W} ; dX_u = x_u, dX'_w = 0\rangle \cong K \otimes_A \bigwedge^{*}_A A^W. \]
		For every $ w \in W $, let $ x'_w = \sum_{u} a_{uw} x_u$. The homomorphism of $ A $-algebras
		\[ \bar{K} \to \widetilde{K}, \quad X_u \mapsto X_u, \quad X'_w \mapsto X'_w + \sum_{u} a_{uw} X_u \]
		is a homomorphism of DG $ A $-algebras with inverse given by $ X_u \mapsto X_u $, $ X'_w \mapsto X'_w - \sum_{u} a_{uw} X_u $. This shows that
		\[ \HH_{*}(\bar{K}) = \HH_{*}(K) \otimes_B \bigwedge^{*}_B B^W. \]
	\end{rem}
	
	In what follows, we will denote by CI-dim the complete intersection dimension, introduced in \cite{AvramovGasharovPeeva}.
	
	\begin{prop} \label{fciKoszul}
		Let $ f \colon \locala \to (B, \mathfrak{n}, k) $ a surjective homomorphism of noetherian local rings with $ I = \ker(f) $. The following are equivalent:
		\begin{enumerate}
			\item[(i)] $ f $ is \emph{fci}
			\item[(ii)] $ \HH^{Kos}_1(I) $ is a free $ B $-module and $ \cidim_A(B) < \infty $
		\end{enumerate}
	\end{prop}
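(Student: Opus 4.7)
The strategy is to use Lemma~\ref{LemmaEquivDef}(ii) to rewrite the surjective fci condition concretely: there exist a flat local extension $A \to A'$ and a Noetherian local ring $Q$ with surjections $Q \twoheadrightarrow A'$ and $Q \twoheadrightarrow A'/IA'$ (using that $B \otimes_A A' = A'/IA'$ since $f$ is surjective) whose kernels are each generated by regular sequences in $Q$. This puts both implications on comparable footing.

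For (i) $\Rightarrow$ (ii), property \propref{RS} turns the two fci vanishings into regular-sequence kernels for $Q \to A'$ and $Q \to A'/IA'$. The first exhibits $A \to A' \twoheadleftarrow Q$ as a quasi-deformation of $A$; the second gives $\pdim_Q(B \otimes_A A') < \infty$, whence $\cidim_A(B) < \infty$. For Koszul freeness, Jacobi-Zariski on $Q \to A' \to A'/IA'$ combined with both vanishings yields $\HH_n(A', A'/IA', l') = 0$ for $n \geq 3$, which by the Blanco-Majadas-Rodicio characterization of qci surjections means $\HH^{Kos}_1(IA')$ is free over $A'/IA'$. Since the Koszul complex base-changes along the flat map $A \to A'$, we have $\HH^{Kos}_1(IA') \cong \HH^{Kos}_1(I) \otimes_B (A'/IA')$, and faithful flatness of $B \to A'/IA'$ descends freeness back to $\HH^{Kos}_1(I)$ over $B$.

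For (ii) $\Rightarrow$ (i), pick a quasi-deformation $A \to A' \twoheadleftarrow R$ realizing $\cidim_A(B) < \infty$, so $\ker(R \to A')$ is a regular sequence and $\pdim_R(A'/IA') < \infty$. The Koszul freeness hypothesis reinterprets (Blanco-Majadas-Rodicio, again) as $A \to B$ being qci, so by \propref{BC} $A' \to A'/IA'$ is qci as well; Jacobi-Zariski on $R \to A' \to A'/IA'$ then forces $R \to A'/IA'$ to be qci too. The decisive input is that a surjective qci local homomorphism of finite projective dimension is ci---a Tate-model / Poincar\'e-series fact recorded in \cite{AvramovHenriquesSega} (any degree-two Tate variable in the minimal model of $A'/IA'$ over $R$ would force infinitely many nonzero Betti numbers, contradicting finite projective dimension). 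Applied to $R \to A'/IA'$, this promotes its kernel to a regular sequence, so taking $A'' = A'$ and $Q = R$ satisfies Definition~\ref{defMCI} outright. This last step---qci surjective plus finite projective dimension forces ci---is the only non-formal ingredient and is the main obstacle; it does not follow from Jacobi-Zariski, flat base change, or Koszul freeness alone, and is precisely what makes the pair (Koszul freeness, finite $\cidim$) a sharp witness of the surjective fci property.
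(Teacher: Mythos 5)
Your overall skeleton matches the paper's, but there is a genuine gap in your (ii) $\Rightarrow$ (i) argument, at the step where you claim that ``the Koszul freeness hypothesis reinterprets (Blanco--Majadas--Rodicio) as $A \to B$ being qci.'' The Blanco--Majadas--Rodicio characterization of quasi-complete intersections requires \emph{two} conditions: $\HH^{Kos}_1(I)$ free over $B$ \emph{and} the canonical map $\bigwedge^{*}_B \HH^{Kos}_1(I) \to \HH^{Kos}_{*}(I)$ an isomorphism; freeness alone does not imply qci. A counterexample to the implication you invoke: take $A$ artinian and not a complete intersection and $I=\mathfrak{m}$, so $B=k$. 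Then $\HH^{Kos}_1(\mathfrak{m})$ is a $k$-vector space, hence free, but $\HH_3(A,k,k)\neq 0$ by \propref{CI}, so $f$ is not qci. Thus the deduction ``$\HH^{Kos}_1(I)$ free $\Rightarrow$ qci'' genuinely needs the second hypothesis $\cidim_A(B)<\infty$, which in your write-up only enters later; this is exactly the content of \cite[Proposition 12]{Soto}, which is what the paper cites at this point (finite CI-dimension together with free first Koszul homology forces $\HH_n(A,B,k)=0$ for $n\geq 3$). So there are two non-formal ingredients in this direction, not one, and the one you dismissed as a formal reinterpretation is the one that fails.

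Once that step is repaired by invoking Soto's theorem, the remainder of your (ii) $\Rightarrow$ (i) argument --- flat base change, Jacobi--Zariski along $Q \to A' \to B\otimes_A A'$, and the rigidity statement that a surjective qci of finite projective dimension is ci --- is the paper's proof verbatim; for the rigidity step the paper cites \cite[17.2]{An1974} (or \cite[Theorem A]{BriggsIyengar}), which is cleaner than the Tate-model sketch you give. Your (i) $\Rightarrow$ (ii) direction is correct but slightly roundabout: the paper gets $\cidim_A(B)<\infty$ the same way from Lemma \ref{LemmaEquivDef}(ii), and obtains freeness of $\HH^{Kos}_1(I)$ directly over $A$ from Proposition \ref{strictImplications} together with \cite{AnPairs}, avoiding your detour through $A'$ and the faithfully flat descent of freeness (which is fine, since $B \to B\otimes_A A'$ is flat local and the module is finitely generated, but unnecessary).
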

	\begin{proof}
		Assume that $ f $ is \emph{fci}. Let $ A \to A' $, $ Q \to A' $ surjective, $ l' $ be as in Lemma \ref{LemmaEquivDef} (ii). Since  $ \HH_2( Q, A', l')=0=\HH_2( Q, B \otimes_A A', -) $, the kernels of the surjective maps $ Q \to A' $ and $ Q \to B \otimes_A A' $ are generated by regular sequences. In particular, $ \pdim_Q(B \otimes_A A')< \infty $ and thus $ \cidim_A(B) < \infty $.
		
		Moreover, by Proposition \ref{strictImplications}, $ \HH_n( A, B, k)=0 $ for all $ n \geq 3 $ and therefore $ \HH^{Kos}_1(I) $ is a free $ B $-module (cf. \cite[Proposition]{AnPairs}).
		
		Conversely, assume that $ \HH^{Kos}_1(I) $ is free as a $ B $-module and $ \cidim_A(B) < \infty $. One then has a flat local homomorphism $ A \to A' $ of noetherian local rings and a surjective homomorphism of noetherian local rings $ Q \to A' $ such that $ \HH_n( Q, A', k)=0 $ for all $  n \geq 2 $ and $ \pdim_Q(B \otimes_A A')< \infty $. By \cite[Proposition 12]{Soto}, $ \HH_n( A, B, k)=0 $ for all $ n \geq 3 $, so then by (flat) base change one gets $ \HH_n( A', B \otimes_A A', k)=0 $ for all $ n \geq 3 $.
		
		The Jacobi-Zariski exact sequence associated with $ Q \to A' \to B \otimes_A A' $ yields
		\[ \HH_n( Q, B \otimes_A A', k)=0 \]
		for all $ n \geq 3 $, and now since $ \pdim_Q(B \otimes_A A')< \infty $, 
		\[ \HH_n( Q, B \otimes_A A', k)=0 \]
		for all $ n \geq 2 $ by \cite[17.2]{An1974} (or by the much stronger result \cite[Theorem A]{BriggsIyengar}).
	\end{proof}
	
	\begin{rem} \label{counterExampleH3fci}
		We now finish the proof of Proposition \ref{strictImplications}. By \cite[Theorem 3.5]{AvramovHenriquesSega}, there exists a surjective homomorphism of noetherian local rings $ A \to B $ such that $ \HH_n( A, B, -)=0 $ for all $ n \geq 3 $, but so that $ \cidim_A(B)= \infty $. But now Proposition \ref{fciKoszul} says this homomorphism $ f $ is not \emph{fci}.
	\end{rem}
	
	\begin{thm} \label{ascentDescentFCI}
		Let $ f \colon A \to B $ be a local homomorphism of noetherian local rings. Assume that $ f $ is \emph{fci}. Then:
		\begin{enumerate}
			\item[(i)] $ A $ is Gorenstein if and only if $ B $ is.
			\item[(ii)] If $ A $ is Cohen-Macaulay, then so is $ B $.
			\item[(iii)] If $ f $ is essentially of finite type and $ B $ is Cohen-Macaulay, then so is $ A $.
		\end{enumerate}
	\end{thm}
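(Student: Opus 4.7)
The plan is to leverage Proposition \ref{strictImplications}, which gives $\HH_n(A, B, l) = 0$ for all $n \geq 3$ whenever $f$ is \emph{fci}; in particular, $f$ is then a quasi-complete intersection homomorphism. Statements (i) and (ii) therefore reduce to the corresponding ascent/descent theorems for \emph{qci} homomorphisms established in \cite{AvramovHenriquesSega} (which extend \cite[Corollary 6]{GarciaSoto} from the surjective case): Gorensteinness transfers in both directions, and Cohen-Macaulayness ascends from $A$ to $B$. Since (iii) in the \emph{qci} setting is equivalent to \cite[Conjecture 4.2]{AvramovHenriquesSega} and hence open, as recalled in the introduction, the strictly stronger \emph{fci} hypothesis and the essentially of finite type assumption must both be exploited for (iii).

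For (iii), using that $f$ is essentially of finite type, I would first produce a regular factorization $A \xrightarrow{\tau} R \xrightarrow{\sigma} B$ by taking $R = A[X_1, \ldots, X_n]_{\mathfrak{p}}$ for a prime $\mathfrak{p}$ contracting to $\mathfrak{m}$ and such that the induced map $R \to B$ is a surjection onto $B$. Then $\tau$ is flat essentially of finite type with regular (hence Cohen-Macaulay) closed fiber, so $A$ is Cohen-Macaulay if and only if $R$ is, and it suffices to prove that $R$ is Cohen-Macaulay. Next, I would show that $\sigma$ inherits the \emph{fci} property from $f$: given witnesses $A \to A'$ and $Q \to A'$ for $f$, one constructs witnesses $R \to R' := (R \otimes_A A')_{\mathfrak{p}'}$ and $Q' := Q[X_1, \ldots, X_n]_{\mathfrak{q}'} \to R'$ for $\sigma$ at suitably chosen primes $\mathfrak{p}'$ and $\mathfrak{q}'$; the required vanishings $\HH_n(Q', R', -) = 0 = \HH_n(Q', B \otimes_R R', -)$ for $n \geq 2$ follow from the corresponding vanishings for $Q \to A'$ and $Q \to B \otimes_A A'$ using flat base change \propref{BC} together with the vanishing of higher André-Quillen homology along the smooth polynomial extensions. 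Once $\sigma$ is known to be \emph{fci}, Proposition \ref{fciKoszul} gives $\cidim_R(B) < \infty$.

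The conclusion then follows from two results on complete intersection dimension from \cite{AvramovGasharovPeeva}: the Auslander-Buchsbaum formula $\cidim_R(B) + \mathrm{depth}_R(B) = \mathrm{depth}(R)$ and the analogue of the new intersection theorem $\dim R \leq \dim B + \cidim_R(B)$. Since $B$ is Cohen-Macaulay and $\sigma$ is surjective one has $\mathrm{depth}_R(B) = \dim B$, and substituting into each other yields $\dim R \leq \mathrm{depth}(R)$, forcing equality and hence $R$ (and so $A$) to be Cohen-Macaulay. The main obstacle I expect is the transfer step showing $\sigma$ is \emph{fci}: one must carefully choose the maximal ideal $\mathfrak{p}'$ of $R \otimes_A A'$ (lying over both the prescribed maximal ideal $\mathfrak{n}'$ of $B \otimes_A A'$ and the maximal ideals of $R$ and $A'$) and the compatible prime $\mathfrak{q}'$ of $Q[X_1, \ldots, X_n]$, and then check that the relevant AQ-homology vanishings really pass through the flat base change and the polynomial extension — without the essentially of finite type hypothesis this construction is unavailable and the argument breaks down, consistent with the fact that (iii) is open for \emph{qci} in general.
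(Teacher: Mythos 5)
Your handling of (i) and (ii) matches the paper's: Proposition \ref{strictImplications} reduces them to the \emph{qci} results of \cite{GarciaSoto} and \cite{AvramovHenriquesSega}. For (iii) you take a genuinely different route, and it can be made to work, but it is worth comparing. The paper argues directly from the witnesses $A \to A'$, $Q \to A'$, $\mathfrak{n}'$ of Definition \ref{defMCI}: since $\HH_2(Q,A',l')=0=\HH_2(Q,B\otimes_AA',l')$, a regular-factorization argument plus \propref{RS} gives $\mathrm{coprof}(A')=\mathrm{coprof}(Q)=\mathrm{coprof}((B\otimes_AA')_{\mathfrak{n}'})$, and the formula $\mathrm{coprof}((B\otimes_AA')_{\mathfrak{n}'})=\mathrm{coprof}(A')+\mathrm{coprof}(B)-\mathrm{coprof}(A)$ of \cite[Corollary 2.5.(3)]{BouchibaCondeMajadas} yields the stronger conclusion $\mathrm{coprof}(A)=\mathrm{coprof}(B)$; the essentially-of-finite-type hypothesis enters only to make $B\otimes_AA'$ noetherian. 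You instead reduce to the surjective part $R\to B$ of a regular factorization, transfer the \emph{fci} property to it, get $\cidim_R(B)<\infty$ from Proposition \ref{fciKoszul}, and finish with CI-dimension arithmetic; this is close to machinery the paper itself supplies elsewhere, namely Proposition \ref{KoszulFreeness} (ii)$\Rightarrow$(i) (which performs, through $\widehat{B}$, exactly the transfer you sketch) and the theorem following Remark \ref{remarkExamplesCounterexamples} (which deduces $\mathrm{coprof}(A)\le\mathrm{coprof}(B)$ from $\cidim_R(\widehat{B})<\infty$ alone via \cite[Theorem 2.2]{AvramovFoxbyHalperin}), so citing those two results would let you skip both your transfer step and your intersection-theorem step. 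Two points in your version need attention. First, the bound $\dim R\le\dim B+\cidim_R(B)$ is, as far as I can see, not stated in \cite{AvramovGasharovPeeva}; it is true, but you should either derive it (apply the New Intersection Theorem over the ring $Q$ of a quasi-deformation $R\to R'\leftarrow Q$ with $\pdim_Q(B\otimes_RR')<\infty$, together with $\dim R'=\dim R+\dim(R'/\mathfrak{m}_RR')$ and $\dim_{R'}(B\otimes_RR')=\dim B+\dim(R'/\mathfrak{m}_RR')$) or replace it by the Cohen--Macaulay-defect descent along finite flat dimension used in the paper. Second, in the transfer step the vanishings of Definition \ref{defMCI} are available only with coefficients in $l'$, so you must localize $R\otimes_AA'$ (noetherian because $R$ is essentially of finite type over $A$) at the contraction $\mathfrak{p}'$ of $\mathfrak{n}'$ and use that $B\otimes_RR'$ is then a localization of $B\otimes_AA'$ in which $\mathfrak{n}'$ survives as a maximal ideal with residue field $l'$; with those choices your reductions via \propref{L}, \propref{BC} and the Jacobi--Zariski sequence along $Q\to Q[X_1,\dots,X_n]$ do go through. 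Modulo these repairs your argument is correct, though it delivers only the descent direction, whereas the paper's computation gives $\mathrm{coprof}(A)=\mathrm{coprof}(B)$ outright.
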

	\begin{proof}
		In \cite[Corollary 6]{GarciaSoto}, it is shown that if a surjective homomorphism of noetherian local rings $ f \colon \locala \to (B, \mathfrak{n}, k) $ satisfies $ \HH_n( A, B, k)=0 $ for all $ n \geq 3 $, then it satisfies (i) and (ii). This result was extended to the case where $ f $ is not necessarily surjective in \cite{AvramovHenriquesSega}. Therefore, by Proposition \ref{strictImplications}, it is enough to show that (iii) holds.
		
		Let $ A \to A' $, $ Q \to A' $, $ \mathfrak{n}' $ and $ l' $ as in Definition \ref{defMCI}. Note that the ring $ B \otimes_A A' $  is noetherian. For any noetherian ring $ R $, we denote by $ \mathrm{coprof}(R) = \dim(R) - \mathrm{depth}(R) $ \cite[$0_{\rm IV}$.16.4.9]{EGAIV1}, so that $ R $ is Cohen-Macaulay if and only if $ \mathrm{coprof}(R) =0 $. We will show that
		\[ \mathrm{coprof}(A) =\mathrm{coprof}(B),  \]
		so in particular $ A $ is Cohen-Macaulay if and only if $ B $ is.
		
		If $ (R, \mathfrak{m},k) \to (S, \mathfrak{n},l) $ is a local homomorphism of noetherian local rings such that $ \HH_2( R, S, l)=0 $, then $ \mathrm{coprof}(R) =\mathrm{coprof}(S) $, for by taking a regular factorization $ R \to T \to \widehat{S} $, $ \mathrm{coprof}(R) =\mathrm{coprof}(T) $ because $ R \to T $ is flat with regular closed fibre $ T \otimes_R k$ (so $ \mathrm{coprof}(T \otimes_R k) =0 $) and moreover, since $ \ker(T \to \widehat{S}) $ is generated by a regular sequence (because $ \HH_2( T, \widehat{S}, l)=0 $; see \propref{RS}), $ \mathrm{coprof}(T) =\mathrm{coprof}(\widehat{S}) =\mathrm{coprof}(S) $. In our particular case,
		\[ \mathrm{coprof}(A') =\mathrm{coprof}(Q) =\mathrm{coprof}((B \otimes_A A')_{\mathfrak{n}'}). \]
		By \cite[Corollary 2.5.(3)]{BouchibaCondeMajadas},
		\[ \mathrm{coprof}((B \otimes_A A')_{\mathfrak{n}'}) = \mathrm{coprof}(A') + \mathrm{coprof}(B) - \mathrm{coprof}(A) \]
		whence
		\[ \mathrm{coprof}(A) = \mathrm{coprof}(B). \]
	\end{proof}
	
	\begin{lem} \label{fciCompositions}
		Let $ \locala \xrightarrow{f} \localb \xrightarrow{g}  (C, \mathfrak{p}, h) $ be local homomorphisms of noetherian local rings.
		\begin{enumerate}
			\item[(i)] If $ f $ is \emph{fci} and $ \HH_n( B, C, h)=0 $ for all $ n \geq 2 $, then $ g \circ f $ is  \emph{fci}.
			\item[(ii)] If $ g \circ f $ is \emph{fci} and $ \HH_n( B, C, h)=0 $ for all $ n \geq 3 $, then $ f $ is \emph{fci}.
		\end{enumerate}
		
		In particular, $ A \to B $ is \emph{fci} if and only if $ A \to \widehat{B} $ is \emph{fci}.
	\end{lem}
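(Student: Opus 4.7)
The plan is to prove (i) by direct transfer of the fci data along $g$ using Jacobi--Zariski \propref{JZ} and flat base change, (ii) by the same pattern combined with an enlargement of $A''$ to fix a maximality issue, and to deduce the ``in particular'' statement by applying (i) and (ii) to $g=(B\to\widehat{B})$.

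For (i), start with fci data for $f$: a flat local $A\to A'$, a homomorphism $Q\to A'$, and a maximal $\mathfrak{n}'\subset B\otimes_A A'$ with residue field $l'$ and $\HH_n(Q,A',l')=0=\HH_n(Q,B\otimes_A A',l')$ for $n\geq 2$. Keeping the same $A'$ and $Q$, pick a maximal ideal $\mathfrak{p}'$ of $C\otimes_A A'=C\otimes_B(B\otimes_A A')$ contracting to both $\mathfrak{n}'$ and the maximal ideal of $C$ (existence by the standard compatibility argument cited after Definition~\ref{defMCI}), and write $h'$ for its residue field. By \propref{CC}, $\HH_n(Q,A',h')=\HH_n(Q,A',l')\otimes_{l'}h'=0$ for $n\geq 2$. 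The computation
\[
\HH_n(B\otimes_A A',C\otimes_A A',h')\stackrel{\propref{BC}}{=}\HH_n(B,C,h')\stackrel{\propref{CC}}{=}\HH_n(B,C,h)\otimes_h h'=0 \quad (n\geq 2),
\]
where the first equality uses flatness of $B\to B\otimes_A A'$ and the hypothesis is $\HH_n(B,C,h)=0$ for $n\geq 2$, combined with Jacobi--Zariski for $Q\to B\otimes_A A'\to C\otimes_A A'$, yields $\HH_n(Q,C\otimes_A A',h')=0$ for $n\geq 2$. Hence $g\circ f$ is fci.

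For (ii), fci data for $g\circ f$ provides $A\to A''$ flat, $Q\to A''$, and a maximal $\mathfrak{p}''\subset C\otimes_A A''$ with residue $h''$ and the corresponding vanishings. The same Jacobi--Zariski/base-change pattern as in (i), now using only $\HH_n(B,C,h)=0$ for $n\geq 3$, gives $\HH_n(Q,B\otimes_A A'',h'')=0$ for $n\geq 2$. The obstacle is that $\mathfrak{p}''\cap(B\otimes_A A'')$ contracts to the maximal ideals of $A''$ and $B$ but need not itself be maximal when $g$ is not surjective, so Definition~\ref{defMCI} does not apply directly with $A''$. I circumvent this by enlarging $A''$: by the standard construction of flat local noetherian extensions with prescribed residue field, choose a flat local $A''\to\widetilde{A}$ whose residue field $\widetilde{l}$ is a chosen extension of $h''$ (note $h\hookrightarrow h''$ since $\mathfrak{p}''$ lies over the maximal ideal of $C$), and where the embedding of the residue field of $A''$ into $\widetilde{l}$ is taken to factor through $h''$. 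Then the ring maps $B\otimes_A\widetilde{A}\twoheadrightarrow\widetilde{l}$ and $C\otimes_A\widetilde{A}\twoheadrightarrow\widetilde{l}$ induced on $B$ and $C$ by $B\to k\hookrightarrow h\hookrightarrow h''\hookrightarrow\widetilde{l}$ (respectively $C\to h\hookrightarrow h''\hookrightarrow\widetilde{l}$) and on $\widetilde{A}$ by $\widetilde{A}\to\widetilde{l}$ have maximal kernels $\widetilde{\mathfrak{n}}$ and $\widetilde{\mathfrak{p}}$ with $\widetilde{\mathfrak{p}}$ contracting to $\widetilde{\mathfrak{n}}$ and to $\mathfrak{p}''$. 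Since $A''\to\widetilde{A}$ is flat with closed fibre the field $\widetilde{l}$, \propref{BC} and \propref{FEx} give $\HH_n(A'',\widetilde{A},\widetilde{l})=0$ for $n\geq 2$, and hence Jacobi--Zariski combined with \propref{CC} for $h''\hookrightarrow\widetilde{l}$ transfers the fci data for $g\circ f$ to the setup $(\widetilde{A},\widetilde{\mathfrak{p}},\widetilde{l})$. The Jacobi--Zariski argument from (i) now yields $\HH_n(Q,B\otimes_A\widetilde{A},\widetilde{l})=0$ for $n\geq 2$, exhibiting fci data for $f$ at $\widetilde{\mathfrak{n}}$.

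For the ``in particular'' claim, apply (i) and (ii) with $g=(B\to\widehat{B})$: by \propref{C}, $\HH_n(B,\widehat{B},M)=0$ for all $n\geq 0$ and every $k$-module $M$, so both hypotheses are trivially satisfied. The hardest step is executing the residue-field enlargement in (ii); once $\widetilde{A}$ is at hand the remaining computations are routine manipulations of \propref{JZ}, \propref{BC}, \propref{CC}, and \propref{FEx}.
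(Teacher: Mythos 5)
Your proof is correct and follows essentially the same route as the paper: part (i) is the same Jacobi--Zariski plus flat base change computation with coefficients in the residue field $h'$ of a maximal ideal of $C\otimes_A A'$ lying over $\mathfrak{n}'$ and $\mathfrak{m}_C$, and part (ii) uses the same exact sequence for $Q \to B\otimes_A A'' \to C\otimes_A A''$ to obtain $\HH_n(Q,B\otimes_A A'',h'')=0$ for $n\geq 2$, with the particular case deduced identically from \propref{C}. Your extra step in (ii) --- the flat local extension $A''\to\widetilde{A}$ with prescribed residue field, introduced because the contraction of $\mathfrak{p}''$ to $B\otimes_A A''$ is only a prime lying over the two maximal ideals and need not itself be maximal --- is a legitimate repair of a point the paper's own proof passes over silently, and it is carried out correctly (only trivial slips: the map should read $B\to l\hookrightarrow h$ rather than $B\to k\hookrightarrow h$, and in the final claim the modules are $l$-modules, not $k$-modules).
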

	\begin{proof}
		(i). Let $ A \to A' $, $ Q \to A' $, $ \mathfrak{n}' $ and $ l' $ be as in Definition \ref{defMCI}. We have, in particular, that $ \HH_n( Q, B \otimes_A A', l')=0 $ for $ n \geq 2 $ so it is enough to show $ \HH_n( Q, C \otimes_A A', l'')=0 $ for $ n \geq 2 $, where $ l'' $ is the residue field of $ C \otimes_A A' $ at a maximal ideal which contracts to the maximal ideal of $ C $ and to $ \mathfrak{n}' $ in $ B \otimes_A A' $. The Jacobi-Zariski exact sequence
		\begin{align*}
			0=\HH_n( Q, B \otimes_A A', l'') \to \HH_n( Q, C \otimes_A A', l'') \to \HH_n( B \otimes_A A', C \otimes_A A', l'')& 
			\\=\HH_n( B, C, l'')=0&
		\end{align*}
		for $ n \geq 2 $ gives the desired result.
		
		(ii) Let $ A \to A' $, $ Q \to A' $ such that
		\[ \HH_n( Q, A', l'')=0=\HH_n( Q, C \otimes_A A', l'') \]
		for $ n \geq 2 $ where $ l'' $ is the residue field of $ C \otimes_A A' $ at a maximal ideal contracting to the maximal ideals of $ C $ and $ A' $. We have an exact sequence for $ n \geq 3 $
		\begin{align*}
			0 = \HH_n( B, C, l'') = \HH_n( B \otimes_A A', C \otimes_A A', l'')  \to &\HH_{n-1}( Q, B \otimes_A A', l'') 
			\\ &\to \HH_{n-1}( Q, C \otimes_A A', l'') =0 
		\end{align*}
		and thus
		\[ \HH_n( Q, B \otimes_A A', l'')=0 \quad \text{for all } n \geq 2. \]
		
		The particular case follows from the fact that $ \HH_n( B, \widehat{B}, l)=0 $ for all $ n $ \propref{C}.
	\end{proof}
	
	\begin{prop} \label{KoszulFreeness}
		Let $ f \colon \locala \to \localb $ a local homomorphism of noetherian local rings. For the following conditions:
		\begin{enumerate}
			\item[(i)] There exists a regular factorization $ A \to R \to \widehat{B} $ such that $  \cidim_R(\widehat{B})<\infty $ and $ \HH^{Kos}_1(I) $ is a free $ \widehat{B} $-module, where $ I = \ker(R \to \widehat{B}) $.
			\item[(ii)] $ f $ is \emph{fci}.
		\end{enumerate}
		(i) $ \Rightarrow $ (ii) holds. If $ f $ is essentially of finite type, then (ii) $ \Rightarrow $ (i) also holds.
	\end{prop}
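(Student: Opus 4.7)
The plan is to handle the two implications separately: $(i) \Rightarrow (ii)$ reduces to a threefold nested application of Jacobi--Zariski sequences starting from Proposition \ref{fciKoszul} applied to the surjection $R \to \widehat{B}$, while $(ii) \Rightarrow (i)$ proceeds by exhibiting an explicit regular factorization of $A \to B$ whose noetherianity is guaranteed by the essentially of finite type hypothesis.

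For $(i) \Rightarrow (ii)$: by the particular case of Lemma \ref{fciCompositions}, it is enough to show $A \to \widehat{B}$ is \emph{fci}. Proposition \ref{fciKoszul} applied to the surjection $R \to \widehat{B}$, using $\cidim_R(\widehat{B}) < \infty$ and the freeness of $\HH^{Kos}_1(I)$, gives that $R \to \widehat{B}$ is \emph{fci}, and Lemma \ref{LemmaEquivDef} produces a quasi-deformation $R \to R' \leftarrow Q$ (with $Q \to R'$ surjective) together with a maximal ideal $\mathfrak{n}''$ of $\widehat{B} \otimes_R R'$ with residue field $l'$, satisfying $\HH_n(Q, R', l') = 0 = \HH_n(Q, \widehat{B} \otimes_R R', l')$ for $n \geq 2$. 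I would then take $R'$ as the flat extension of $A$ via $A \to R \to R'$, keep the same $Q$, and pick $\mathfrak{n}' \subset \widehat{B} \otimes_A R'$ as the preimage of $\mathfrak{n}''$ under the canonical surjection $\widehat{B} \otimes_A R' \twoheadrightarrow \widehat{B} \otimes_R R'$, so that the residue field remains $l'$ and the first vanishing condition is automatic; only $\HH_n(Q, \widehat{B} \otimes_A R', l') = 0$ for $n \geq 2$ requires argument.

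This last vanishing follows from three Jacobi--Zariski sequences in cascade. Applying the first to $Q \to \widehat{B} \otimes_A R' \to \widehat{B} \otimes_R R'$ and using the vanishing over $\widehat{B} \otimes_R R'$ reduces the problem to $\HH_n(\widehat{B} \otimes_A R', \widehat{B} \otimes_R R', l') = 0$ for $n \geq 3$; flat base change along $R \to R'$ then identifies this with $\HH_n(\widehat{B} \otimes_A R, \widehat{B}, l') = 0$ for $n \geq 3$. The second, attached to $R \to \widehat{B} \otimes_A R \to \widehat{B}$, uses flat base change along $A \to R$ to rewrite $\HH_n(R, \widehat{B} \otimes_A R, l') \cong \HH_n(A, \widehat{B}, l')$, and the \emph{fci} property of $R \to \widehat{B}$ to kill $\HH_n(R, \widehat{B}, l')$ for $n \geq 3$, so what remains is $\HH_n(A, \widehat{B}, l') = 0$ for $n \geq 3$ together with injectivity of $\HH_2(A, \widehat{B}, l') \to \HH_2(R, \widehat{B}, l')$. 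The third Jacobi--Zariski sequence, attached to $A \to R \to \widehat{B}$, delivers both of these once one knows $\HH_n(A, R, l') = 0$ for $n \geq 2$; this last vanishing is standard for a flat local homomorphism with regular closed fibre, obtained by base-changing to the fibre $R/\mathfrak{m}R$ and combining the AQ-acyclicity of field extensions with that of regular local rings. The main difficulty of $(i) \Rightarrow (ii)$ is orchestrating these three sequences so that the only residual input needed is the injectivity $\HH_2(A, \widehat{B}, l') \hookrightarrow \HH_2(R, \widehat{B}, l')$.

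For $(ii) \Rightarrow (i)$ with $f$ essentially of finite type: write $B = (A[X_1, \dots, X_n]/\mathfrak{b})_{\mathfrak{p}}$ and let $A' = A[X_1, \dots, X_n]_{\mathfrak{q}}$ with $\mathfrak{q}$ the contraction of $\mathfrak{p}$, yielding a regular factorization $A \to A' \to B$ of $A \to B$ in which $A'$ is essentially of finite type over $A$ and $A' \to B$ is surjective. I would first show $A' \to B$ is \emph{fci}: starting from fci data $A \to A_1$, $Q_1 \to A_1$ for $A \to B$ supplied by Lemma \ref{LemmaEquivDef}(ii), form $A_2 = (A' \otimes_A A_1)_{\mathfrak{q}'}$ for a maximal ideal $\mathfrak{q}'$ whose image in $B \otimes_A A_1$ matches the maximal ideal prescribed by the fci data (possible because $A' \otimes_A A_1 \twoheadrightarrow B \otimes_A A_1$ is surjective and forces the relevant residue fields to coincide). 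Noetherianity of $A' \otimes_A A_1$ here is precisely what essentially of finite type supplies, and this is the delicate point. Jacobi--Zariski arguments parallel to those above, combined with $\HH_n(A, A', -) = 0$ for $n \geq 2$, certify that $A' \to B$ is \emph{fci}; Proposition \ref{fciKoszul} then yields $\cidim_{A'}(B) < \infty$ and $\HH^{Kos}_1(\mathfrak{b} A')$ free. Setting $R = \widehat{A'}$ gives a Cohen factorization $A \to R \to \widehat{B}$, completion of the quasi-deformation transfers the $\cidim$ condition to the pair $(R, \widehat{B})$, and freeness of $\HH^{Kos}_1(I)$ over $\widehat{B}$ descends faithfully flatly from freeness of $\HH^{Kos}_1(\mathfrak{b}A')$ over $B$ along $B \to \widehat{B}$.
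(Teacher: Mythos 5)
Your argument is correct in both directions, but it is organized differently from the paper's proof, and the comparison is worth recording. For (i) $\Rightarrow$ (ii), the paper does not pass through Proposition \ref{fciKoszul}: it works directly with the quasi-deformation $R \to R' \twoheadleftarrow Q$ coming from $\cidim_R(\widehat{B})<\infty$, base changes the Koszul condition to $I'=IR'$, and obtains $\HH_2(Q,\widehat{B}\otimes_R R',l')=0$ from Rodicio's injectivity theorem together with Avramov's result that $\HH_2(Q,\widehat{B}\otimes_R R',l')\to\HH_2(Q,l',l')$ vanishes when the projective dimension is finite; it then compares $\widehat{B}\otimes_A R'$ with $\widehat{B}\otimes_R R'$ via Andr\'e's 5.22/5.23 sequences. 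You instead quote Proposition \ref{fciKoszul} for the surjection $R\to\widehat{B}$ (legitimate, no circularity) and transfer the fci data from base $R$ to base $A$ by your cascade of Jacobi--Zariski sequences and flat base change; the decisive input in both treatments is the same, namely $\HH_n(A,R,l')=0$ for $n\ge 2$ from the regular closed fibre, which in your version yields the injectivity of $\HH_2(A,\widehat{B},l')\to\HH_2(R,\widehat{B},l')$, and your identification of the connecting map with the canonical one is the standard functoriality also used elsewhere in the paper. For (ii) $\Rightarrow$ (i), the paper does not first prove that the surjection $R\to B$ is fci: it localizes and completes the fci data directly, takes a regular factorization $\widehat{Q}\to T\to\widehat{R'}$, checks that both kernels are generated by regular sequences to get $\pdim_T(\widehat{B}\otimes_{\widehat{R}}\widehat{R'})<\infty$ (so the quasi-deformation for $\widehat{R}$ is built at the completed level from the start), and gets the Koszul freeness from $\HH_3(\widehat{R},\widehat{B},l)=0$ and Andr\'e's pairs-of-complete-intersections result. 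Your route (show $A'\to B$ fci, apply Proposition \ref{fciKoszul}, then complete) is more modular and your sketch of the fci verification for $A'\to B$ does go through, using $\HH_n(A,A',-)=0$ for $n\ge 2$ and the localization property.

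The one step you leave essentially unargued is the assertion that completion transfers $\cidim_{A'}(B)<\infty$ to $\cidim_{\widehat{A'}}(\widehat{B})<\infty$. This is true but is not a quotable one-liner: one must complete the quasi-deformation $A'\to A''\twoheadleftarrow Q$, check via the local flatness criterion that $\widehat{A'}\to\widehat{A''}$ is flat, check that the regular sequence still generates $\ker(\widehat{Q}\to\widehat{A''})$, and identify $\widehat{B}\otimes_{\widehat{A'}}\widehat{A''}$ with $(B\otimes_{A'}A'')\otimes_Q\widehat{Q}$ to bound the projective dimension over $\widehat{Q}$; the paper sidesteps exactly this by constructing the quasi-deformation over $\widehat{R}$ directly. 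Also, your final Koszul step is base change (ascent) along the flat map $B\to\widehat{B}$, not ``descent''; the content is trivially correct, but the wording should be fixed.
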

	\begin{proof}
		(i) $ \Rightarrow $ (ii). Since $ \cidim_R(\widehat{B}) < \infty $, there exists a local flat homomorphism $ R \to R' $ and a surjective homomorphism of noetherian local rings $ Q \to R' $ such that $ \HH_n( Q, R', -)=0 $ for all $ n \geq 2 $ and $ \pdim_Q(\widehat{B} \otimes_R R')< \infty $. Set $ I' \eqdef IR' = \ker(R' \to \widehat{B} \otimes_R R') $. Taking a set of generators for $ I' $ arising from one for $ I $, one gets $ \HH^{Kos}_1(I') = \HH^{Kos}_1(I) \otimes_{\widehat{B}} (\widehat{B} \otimes_R R')$, so that it is a free $ \widehat{B} \otimes_R R' $-module. Now, in the following commutative diagram where the left column is exact and $l'$ is the residue field of $Q, R'$ and $\widehat{B} \otimes_R R',$
		\begin{center}
			\begin{tikzcd}
				\HH_2( R', \widehat{B} \otimes_R R' , l') & \HH_2( R', l', l')
				\\ \HH_2( Q, \widehat{B} \otimes_R R' , l') & \HH_2( Q, l', l')
				\\ \HH_2( Q, R' , -)=0
				
				\arrow[from=2-1, to=1-1, "\alpha"]
				\arrow[from=1-1, to=1-2, "\beta"]
				\arrow[from=2-1, to=2-2, "\gamma"]
				\arrow[from=3-1, to=2-1]
				\arrow[from=2-2, to=1-2]
			\end{tikzcd}
		\end{center}
		one has that $ \beta $ is injective because $ \HH^{Kos}_1(I') $ is free \cite[Theorem 1]{RodicioJPAA}, as is $ \alpha $ by exactness. Since $ \pdim_Q(\widehat{B} \otimes_R R')< \infty $, by \cite{AvramovCRAS} one gets that $ \gamma=0 $. In conclusion, $ \HH_2( Q, \widehat{B} \otimes_R R' , l')=0 $ so, by \propref{RS},
		\[ \HH_n( Q, \widehat{B} \otimes_R R' , l')=0 \quad \text{ for all } n \geq 2.\]
		
		By \cite[5.23]{An1974}, we have a canonical isomorphism 
		\[ \HH_n( A, \widehat{B}, l') \oplus \HH_n( A, R', l') \xrightarrow{\cong} \HH_n( A, \widehat{B} \otimes_A R', l'), \]
		while the exact sequence \cite[5.22]{An1974}
		\[ \HH_n( A, R, l') \to \HH_n( A, \widehat{B}, l') \oplus \HH_n( A, R', l') \to \HH_n( A, \widehat{B} \otimes_R R', l') \to \HH_{n-1}( A, R, l')\]
		gives that the homomorphism 
		\[ \HH_n( A, \widehat{B}, l') \oplus \HH_n( A, R', l') \to \HH_n( A, \widehat{B} \otimes_R R', l') \]
		is an isomorphism for $ n \geq 3 $ and injective for $ n=2 $, since $ \HH_n( A, R, l')= \HH_n( k, R \otimes_A k , l')=0 $ for all $  n \geq 2 $ because $ R \otimes_A k $ is regular.
		
		Thus, the canonical homomorphism $ \HH_n( A, \widehat{B} \otimes_A R', l') \to \HH_n( A, \widehat{B} \otimes_R R', l') $ is an isomorphism for all $n \geq 3 $ and injective for $n=2$. But now the Jacobi-Zariski exact sequence 
		\begin{align*}
			&\HH_n( A, \widehat{B} \otimes_A R', l') \to \HH_n( A, \widehat{B} \otimes_R R', l') \to \HH_n( \widehat{B} \otimes_A R', \widehat{B} \otimes_R R', l')
			\\ &\to \HH_{n-1}( A, \widehat{B} \otimes_A R', l') \to \HH_{n-1}( A, \widehat{B} \otimes_R R', l') 
		\end{align*}
		says that 
		\[ \HH_n( \widehat{B} \otimes_A R', \widehat{B} \otimes_R R', l')=0 \quad \text{ for all } n \geq 3.\]
		
		We can now deduce, using $\HH_n( Q, \widehat{B} \otimes_R R' , l')=0$ for all $n  \geq 2$, that
		\[ \HH_n( Q, \widehat{B} \otimes_A R' , l')=0 \quad \text{ for all } n \geq 2.\]
		
		Taking $A'=R'$ in Definition \ref{defMCI}, one sees that the homomorphism $A \to \widehat{B}$ is \emph{fci}. By Lemma \ref{fciCompositions}, $A \to B$ is \emph{fci}.
		
		(ii) $\Rightarrow$ (i). Under the additional hypothesis, we have a regular factorization $ A \to R \to B $, with $R$ an $A$-algebra essentially of finite type. Now let $A \to A'$, $ Q \to A'$, $\mathfrak{n}'$, $l'$ be as in Definition \ref{defMCI}. The fact that $\HH_n(A,R,l')=0$ for all $n \geq 2$ implies, by flat base change, that $\HH_n(A',R \otimes_A A',l')=0$, so by the Jacobi-Zariski exact sequence
		\[ \HH_n(Q,R \otimes_A A',l')=0\]
		for $n \geq 2$. Note that $R \otimes_A A'$ is a noetherian ring. Set $R' \eqdef (R \otimes_A A')_{\mathfrak{q}}$, $\mathfrak{q}$ being the contraction of $\mathfrak{n}'$. Localizing, we get \propref{L}
		\[\HH_n(Q,R',l')=0 \quad \text{ for all } n \geq 2,\]
		so, therefore,
		\[ \HH_n(\widehat{Q},\widehat{R'},l'')=0 \quad \text{ for all } n \geq 2, \]
		where $l''$ is the residue field of both $\widehat{(B \otimes_R R')_{\mathfrak{n}'}} = \widehat{(B \otimes_A A')_{\mathfrak{n}'}}$ and $\widehat{R'}$. Since $R \to R'$ is flat, the local criterion for flatness says that so is $\widehat{R} \to \widehat{R'}$. We will now show that $ \HH_n(\widehat{Q},\widehat{B} \otimes_{\widehat{R}}\widehat{R'},l'')=0 $ for all $n \geq 2$.
		
		Consider the homomorphisms
		\[Q \to B \otimes_A A' \to B \otimes_R R' \to \widehat{B} \otimes_{\widehat{R}}\widehat{R'} = \widehat{B \otimes_R R'}\]
		Applying $\HH_n$ for $n \geq 2$ to each of the maps above, one gets the vanishing of all of them:
		\begin{enumerate}
			\item $\HH_n(Q, B \otimes_A A', l'')=0$ by Definition \ref{defMCI},
			\item $\HH_n( B \otimes_A A', B \otimes_R R', l'')=0$  because $B \otimes_R R'$ is a localization of $B \otimes_A A'$,
			\item $\HH_n(B \otimes_R R', \widehat{B \otimes_R R'},l'')=0$ by \propref{C}.
		\end{enumerate}
		Therefore, using Jacobi-Zariski, we have that
		\[ \HH_n(Q, \widehat{B} \otimes_{\widehat{R}} \widehat{R'},l'')=0 \quad \text{ for all } n \geq 2\]
		and, since $ \HH_n(Q, \widehat{Q}, l'')=0 $ for all $n$, then
		\[ \HH_n(\widehat{Q}, \widehat{B} \otimes_{\widehat{R}} \widehat{R'},l'')=0 \quad \text{ for all } n \geq 2.\]
		
		Let $ \widehat{Q} \to T \to \widehat{R'} $ be a regular factorization. By \cite[Lemma 19]{MajadasNagoya}, for $ n \geq 2 $ one has $ \HH_n(T,\widehat{R'}, l'') = \HH_n(\widehat{Q},\widehat{R'}, l'')=0 $ and $ \HH_n(T, \widehat{B} \otimes_{\widehat{R}} \widehat{R'},l'') = \HH_n(\widehat{Q}, \widehat{B} \otimes_{\widehat{R}} \widehat{R'},l'')=0 $, so then $\ker(T \to \widehat{R'})$ and $\ker(T \to \widehat{B} \otimes_{\widehat{R}} \widehat{R'})$ are generated by regular sequences. In particular, $\pdim_T(\widehat{B} \otimes_{\widehat{R}} \widehat{R'})<\infty$.
		
		We have showed that $ \cidim_{\widehat{R}}(\widehat{B})< \infty $. It remains to be shown that, if $I = \ker(\widehat{R} \to \widehat{B})$, then $ \HH^{Kos}_1(I) $ is a free $\widehat{B}$-module. Now, by Proposition \ref{strictImplications} we obtain $\HH_n(A,B,l')=0$ for all $n \geq 3$, whence $\HH_n(R,\widehat{B},l')=0$ and then $\HH_n(\widehat{R},\widehat{B},l) \otimes_l l'=0$ for all $n \geq 3$. By \cite[Proposition]{AnPairs}, $ \HH^{Kos}_1(I) $ is a free $\widehat{B}$-module.
	\end{proof}
	
	\begin{rem} \label{remarkExamplesCounterexamples}
		\hfill
		\begin{enumerate}
			\item[(i)] Note that condition (i) in Proposition \ref{KoszulFreeness} does not imply that $\HH(A,B,l)=0$ for all $n \geq 2$. It is enough to take $A$ a local ring that is complete intersection but not regular and $B$ its residue field.
			\item[(ii)] If $f \colon \locala \to \localb$ is a homomorphism between complete intersection rings, then condition (i) in Proposition \ref{KoszulFreeness} is satisfied. Thus, by Proposition \ref{KoszulFreeness} and Theorem \ref{2of3}, two of the following conditions imply the third:
			\begin{enumerate}
				\item[(a)] Condition (i) in Proposition \ref{KoszulFreeness}.
				\item[(b)] $A$ is complete intersection.
				\item[(c)] $B$ is complete intersection.
			\end{enumerate} 
			\item[(iii)] Condition (i) in Proposition \ref{KoszulFreeness} is equivalent to the following: $\HH_n(A,B,l)=0$ for all $n \geq 3$ and there exists a Cohen factorization $ A \to R \to \widehat{B}$ such that $\cidim_R(\widehat{B})< \infty$. Indeed, by Proposition \ref{KoszulFreeness} and Proposition \ref{strictImplications}, it is enough to show that, if $\HH_3(A,B,l)=0$ and $ I = \ker(R \to \widehat{B}) $, then $ \HH^{Kos}_1(I) $ is a free $\widehat{B}$-module, but this follows from the fact that $ \HH_3( R, \widehat{B}, l)= \HH_3( A, \widehat{B}, l)= \HH_3( A, B, l)=0 $ and \cite[Proposition]{AnPairs}.
		\end{enumerate}
	\end{rem}
	
	The advantage that condition (i) in Proposition \ref{KoszulFreeness} has over the concept of \emph{fci} homomorphisms is that it is possible to prove a result analogous to that of Theorem \ref{ascentDescentFCI}, but dropping the finiteness hypotheses.
	
	\begin{thm}
		Let $ f \colon \locala \to \localb $ be a homomorphism satisfying the condition (i) in Proposition \ref{KoszulFreeness}. Then:
		\begin{enumerate}
			\item[(i)] $A$ is Gorenstein if and only if $ B $ is.
			\item[(ii)] $A$ is Cohen-Macaulay if and only if $ B $ is.
		\end{enumerate}
	\end{thm}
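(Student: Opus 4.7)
My plan is to reduce the theorem to Theorem \ref{ascentDescentFCI} by noticing that the hypothesis already produces a \emph{surjective} \emph{fci} homomorphism, for which the ``essentially of finite type'' assumption needed for the Cohen--Macaulay descent in Theorem \ref{ascentDescentFCI}(iii) holds for free. This is the whole point of condition (i) in Proposition \ref{KoszulFreeness}: the finiteness appears at exactly the spot where one does not need to impose it on $f$.

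First I would unpack the hypothesis. Condition (i) of Proposition \ref{KoszulFreeness} hands us a regular factorization $A \xrightarrow{\tau} R \xrightarrow{\sigma} \widehat{B}$ with $\cidim_R(\widehat{B})<\infty$ and $\HH^{Kos}_1(\ker\sigma)$ free over $\widehat{B}$. Applying Proposition \ref{fciKoszul} to the \emph{surjection} $\sigma$ (both conditions in its (ii) are on hand) shows that $\sigma$ is itself an \emph{fci} homomorphism.

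Next I would split the question along the three pieces $A \xrightarrow{\tau} R \xrightarrow{\sigma} \widehat{B}$ and the completion map $B \to \widehat{B}$. The map $\tau$ is flat and local with regular closed fibre $R\otimes_A k$; since a regular ring is both Gorenstein and Cohen--Macaulay, the standard ascent/descent of these properties along flat local homomorphisms with Gorenstein (respectively Cohen--Macaulay) closed fibre yields that $A$ is Gorenstein (resp.\ Cohen--Macaulay) if and only if $R$ is. The map $\sigma$ is surjective, hence trivially essentially of finite type as an $R$-algebra, so the \emph{entirety} of Theorem \ref{ascentDescentFCI}, including part (iii), applies and gives $R$ Gorenstein $\Leftrightarrow$ $\widehat{B}$ Gorenstein, and $R$ Cohen--Macaulay $\Leftrightarrow$ $\widehat{B}$ Cohen--Macaulay. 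Finally, $\mathfrak{n}$-adic completion preserves both the Gorenstein and the Cohen--Macaulay properties in both directions, so $B$ Gorenstein (resp.\ Cohen--Macaulay) $\Leftrightarrow$ $\widehat{B}$ Gorenstein (resp.\ Cohen--Macaulay).

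Chaining the three equivalences produces (i) and (ii). The only step I would expect to cause hesitation is the Cohen--Macaulay \emph{descent}, which, for a general \emph{fci} homomorphism, requires the extra finiteness hypothesis of Theorem \ref{ascentDescentFCI}(iii); here the obstruction evaporates because the step where the hypothesis is invoked is applied to $\sigma$ rather than to $f$, and $\sigma$ is surjective.
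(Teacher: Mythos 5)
Your argument is correct, but it takes a genuinely different route from the paper's for the crucial direction ($B$ Cohen--Macaulay $\Rightarrow$ $A$ Cohen--Macaulay). You observe that the hypothesis is exactly condition (ii) of Proposition \ref{fciKoszul} for the surjection $\sigma \colon R \to \widehat{B}$, so $\sigma$ is \emph{fci}; since a surjection is trivially essentially of finite type, all of Theorem \ref{ascentDescentFCI}, including the descent statement (iii), applies to $\sigma$, and you then transfer the properties along the flat piece $A \to R$ (regular closed fibre) and the completion $B \to \widehat{B}$ by standard facts. This is a clean reduction: it only needs the easy (surjective) criterion of Proposition \ref{fciKoszul} rather than the implication (i)~$\Rightarrow$~(ii) of Proposition \ref{KoszulFreeness}, and it makes the finiteness hypothesis of Theorem \ref{ascentDescentFCI}(iii) vacuous by applying it to $\sigma$ instead of $f$. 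The paper instead handles the Gorenstein statement and the ascent of Cohen--Macaulayness by noting that $f$ itself is \emph{fci}, and for the descent gives a direct coprofundity computation: choosing a quasi-deformation $R \to R' \leftarrow Q$ witnessing $\cidim_R(\widehat{B})<\infty$, it shows $\mathrm{coprof}(A) \leq \mathrm{coprof}(B)$ using the descent result of \cite{AvramovFoxbyHalperin} along finite projective dimension. What the paper's route buys is that this descent direction uses only $\cidim_R(\widehat{B})<\infty$ (the freeness of $\HH^{Kos}_1(I)$ is not needed there, so a slightly more general inequality is proved) and it bypasses the tensor-product coprofundity formula of \cite{BouchibaCondeMajadas} on which the proof of Theorem \ref{ascentDescentFCI}(iii) rests; what your route buys is brevity and a transparent explanation of why the essentially-of-finite-type hypothesis disappears, namely that it is only ever needed on the surjective leg of the factorization.
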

	\begin{proof}
		By Theorem \ref{ascentDescentFCI} and Proposition \ref{KoszulFreeness}, it is enough to show that if $B$ is Cohen-Macaulay, then so is $A$.
		
		Let $ A \to R \to \widehat{B} $ be a Cohen factorization. One can prove, more generally, that $ \mathrm{coprof}(A) \leq \mathrm{coprof}(B) $, assuming that $ \cidim_R(\widehat{B}) < \infty $. This is known: let $ R \to R' $ be a flat local homomorphism of noetherian local rings and $ Q \to R' $ a surjective homomorphism of noetherian local rings such that $ \HH_n( Q, R', -)=0 $ for all $ n \geq 2 $ and $ \pdim_Q(\widehat{B} \otimes_R R')< \infty $. Let $ l' $ be the residue field of $ R' $. The closed fibre $ l \otimes_R R' $ of the homomorphism $ R \xrightarrow{\beta} R' $ agrees with that of the induced homomorphism $ \widehat{B} \xrightarrow{\alpha} \widehat{B} \otimes_R R' $. We have
		\begin{align*}
			&\mathrm{coprof}(B) = \mathrm{coprof}(\widehat{B}) \overset{(1)}{=} \mathrm{coprof}(\widehat{B} \otimes_R R') - \mathrm{coprof}(l' \otimes_R R') 
			\\ &\overset{(2)}{=} \mathrm{coprof}(\widehat{B} \otimes_R R') - (\mathrm{coprof}(R') - \mathrm{coprof}(R)) 
			\\ &\overset{(3)}{\geq} \mathrm{coprof}(Q) - (\mathrm{coprof}(R') - \mathrm{coprof}(R))
			\\ &\overset{(4)}{=} \mathrm{coprof}(R) \geq \mathrm{coprof}(A)
		\end{align*}
		where (1) (resp. (2)) holds due to the fact that $\alpha$  (resp. $\beta$) is a flat local homomorphism, (3) follows from the fact that $ \pdim_Q(\widehat{B} \otimes_R R') < \infty $ (see for instance \cite[Theorem 2.2]{AvramovFoxbyHalperin}), and (4) from the equality $ \mathrm{coprof}(Q) = \mathrm{coprof}(R') $, owing to the fact that $\ker(Q \to R')$ is generated by a regular sequence.
	\end{proof}

	\section{Moderately complete intersection homomorphisms} \label{sectionMCI}
	
	\begin{prop} \label{eqCIDefect}
		Let $ f \colon \locala \to \localb $ be a local homomorphism of noetherian local rings. Let
		\begin{align*}
			A \to R \xrightarrow{r} \widehat{B}
			\\A \to S \xrightarrow{s} \widehat{B}
		\end{align*}
		be two regular factorizations of $f$. Then the integers
		\begin{align*}
			\mathrm{d}(r) &\eqdef \dim_l \HH_2( R, \widehat{B}, l) - \dim_l \HH_1( R, \widehat{B}, l) + \dim R - \dim B
			\\ \mathrm{d}(s) &\eqdef \dim_l \HH_2( S, \widehat{B}, l) - \dim_l \HH_1( S, \widehat{B}, l) + \dim S - \dim B
		\end{align*}
		agree.
	\end{prop}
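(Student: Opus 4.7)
The plan is to reduce to comparing two Cohen factorizations and then to compare them through a common refinement. First, by property \propref{C} the Jacobi-Zariski sequence \propref{JZ} applied to $R \to \widehat{R} \to \widehat{B}$ (together with $\HH_n(R, \widehat{R}, -)=0$) yields $\HH_i( R, \widehat{B}, l) \cong \HH_i( \widehat{R}, \widehat{B}, l)$ for all $i$; since also $\dim R = \dim \widehat{R}$, we obtain $\mathrm{d}(r) = \mathrm{d}(\widehat{r})$, and likewise for $S$. So I may assume that both $R$ and $S$ are complete, i.e., that we are dealing with two Cohen factorizations of $A \to \widehat{B}$.

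Next, I would invoke the standard construction producing a common refinement of two Cohen factorizations: a Cohen factorization $A \to T \xrightarrow{t} \widehat{B}$ of $A \to \widehat{B}$ together with surjective local $A$-algebra homomorphisms $p \colon T \to R$ and $q \colon T \to S$ satisfying $r \circ p = t = s \circ q$. It then suffices to show $\mathrm{d}(r) = \mathrm{d}(t)$, as the same argument applied to $q$ will give $\mathrm{d}(s) = \mathrm{d}(t)$.

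For this, I would apply Jacobi-Zariski \propref{JZ} to $T \xrightarrow{p} R \xrightarrow{r} \widehat{B}$ with coefficients in $l$. The crucial input is that $\ker(p)$ is generated by a regular sequence of length $\dim T - \dim R$: indeed, the induced surjection of the regular closed fibres $T \otimes_A k \to R \otimes_A k$ has kernel generated by part of a regular system of parameters, and by flatness of $A \to T$ any lift of such a sequence is regular in $T$. Hence by \propref{RS}, \propref{1DG}, and \propref{0DG} we have $\HH_n( T, R, l)=0$ for $n \geq 2$ and $n=0$, while $\dim_l \HH_1( T, R, l) = \dim T - \dim R$. The Jacobi-Zariski sequence thus collapses to the five-term exact sequence
\[
0 \to \HH_2( T, \widehat{B}, l) \to \HH_2( R, \widehat{B}, l) \to \HH_1( T, R, l) \to \HH_1( T, \widehat{B}, l) \to \HH_1( R, \widehat{B}, l) \to 0,
\]
whose terms are all finite-dimensional $l$-vector spaces (since the maps to $\widehat{B}$ are surjections of noetherian local rings, their $\HH_1$ and $\HH_2$ with trivial coefficients are finite-dimensional over $l$). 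Taking alternating dimensions and substituting $\dim_l \HH_1( T, R, l) = \dim T - \dim R$ gives
\[
\dim_l \HH_2( R, \widehat{B}, l) - \dim_l \HH_1( R, \widehat{B}, l) + \dim R = \dim_l \HH_2( T, \widehat{B}, l) - \dim_l \HH_1( T, \widehat{B}, l) + \dim T,
\]
and subtracting $\dim B$ yields $\mathrm{d}(r) = \mathrm{d}(t)$, as desired.

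The main obstacle is securing the common refinement $T$; this is essentially the content of a classical construction for Cohen factorizations of a local homomorphism (Avramov-Foxby-Herzog), and once it is in hand the regular-sequence property of $\ker(p)$ and the Euler characteristic computation are routine.
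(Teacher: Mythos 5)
Your proposal is correct and follows essentially the same route as the paper: reduce to Cohen factorizations via \propref{C}, join them by a dominating Cohen factorization $A \to T \to \widehat{B}$ with surjections onto $R$ and $S$ (the Avramov--Foxby--Herzog comparison, \cite[Theorem 1.2]{AvramovFoxbyHerzog}), and then take alternating $l$-dimensions in the five-term Jacobi--Zariski sequence for $T \to R \to \widehat{B}$ using $\dim_l \HH_1(T,R,l) = \dim T - \dim R$. The only difference is cosmetic: you re-derive the fact that $\ker(T \to R)$ is generated by a regular sequence from flatness and regularity of the closed fibres, whereas the paper simply quotes this as part of the cited comparison theorem.
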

	\begin{proof}
		Since $ \HH_n( R, \widehat{B}, l)=\HH_n( \widehat{R}, \widehat{B}, l) $, replacing $ R $ by $ \widehat{R} $ and $ S $ by $ \widehat{S} $, we can assume that $ R $ and $ S $ are complete, that is, that we are working with Cohen factorizations. By \cite[Theorem 1.2]{AvramovFoxbyHerzog} there is a commutative diagram of Cohen factorizations
		\begin{center}
			\begin{tikzcd}
				& R &
				\\A & T & \widehat{B}
				\\ & S & 
				
				\arrow[from=2-1, to=1-2]
				\arrow[from=2-1, to=2-2]
				\arrow[from=2-1, to=3-2]
				\arrow[from=2-2, to=1-2, "\alpha"]
				\arrow[from=2-2, to=3-2, "\beta", swap]
				\arrow[from=1-2, to=2-3, "r"]
				\arrow[from=2-2, to=2-3, "t"]
				\arrow[from=3-2, to=2-3, "s", swap]
			\end{tikzcd}
		\end{center}
		where $\alpha$ and $\beta$ are surjective homomorphisms of noetherian local rings whose kernels are generated by regular sequences. Then the Jacobi-Zariski exact sequence
		\begin{align*}
			0&= \HH_2( T, R, l) \to \HH_2( T, \widehat{B}, l) \to \HH_2( R, \widehat{B}, l) 
			\\ &\to \HH_1( T, R, l) \to \HH_1( T, \widehat{B}, l) \to \HH_1( R, \widehat{B}, l) \to 0 
		\end{align*}
		together with the equality
		\[ \dim_l \HH_1( T, R, l) = \dim_l J/J^2 \otimes_R l  = \dim T - \dim R,\]
		where $ J = \ker(T \to R) $, yields $ \mathrm{d}(r) = \mathrm{d}(t) $.
	\end{proof}
	
	\begin{defn}
		Following the notation of Proposition \ref{eqCIDefect}, define
		\[ \mathrm{d}(f) \eqdef \mathrm{d}(r). \] 
		
		If $ \pi \colon A \to k $ is the canonical projection of a noetherian local ring onto its residue field, $ \mathrm{d}(\pi) = \dim_l \HH_2( A, k, k) - \dim_l \HH_1( A, k, k) + \dim A $ agrees with the invariant $ \mathrm{d}(A) $ introduced in \cite{KiehlKunz}, which was baptized as the complete intersection defect of $ A $ in \cite{AvramovMathAnn}, since $ A $ is complete intersection $ \iff \mathrm{d}(A)=0$.  
	\end{defn}
	
	\begin{prop} \label{conditionsFCI}
		The following are equivalent:
		\begin{enumerate}
			\item[(i)] $ \HH_n( A, B, l)=0 $ for all $ n \geq 3 $ and $ \mathrm{d}(f)=0 $.
			\item[(ii)] $ \HH_n( A, B, l)=0 $ for all $ n \geq 3 $ and $ \mathrm{d}(A)=\mathrm{d}(B) $.
		\end{enumerate}
	\end{prop}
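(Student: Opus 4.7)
The equivalence reduces to showing, under the common vanishing hypothesis $\HH_n(A,B,l) = 0$ for all $n \geq 3$, the identity $\mathrm{d}(f) = \mathrm{d}(A) - \mathrm{d}(B)$; from this, both directions of the equivalence are immediate. I fix a Cohen factorization $A \to R \to \widehat{B}$ and work there, using \propref{C} throughout to identify $\mathrm{d}(B)$ with $\mathrm{d}(\widehat{B})$ and the relevant André-Quillen modules.

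The first block of steps establishes the vanishings I shall need. By \propref{BC} applied to the flat map $A \to R$ one has $\HH_n(A, R, l) = \HH_n(k, R \otimes_A k, l)$; since the closed fibre $R \otimes_A k$ is regular local with residue field $l$, feeding this into the Jacobi-Zariski sequence for $k \to R \otimes_A k \to l$ and using \propref{Reg} and \propref{FEx} yields $\HH_n(A, R, l) = 0$ for all $n \geq 2$. Applying \propref{JZ} to $A \to R \to \widehat{B}$ then propagates the hypothesis into $\HH_n(R, \widehat{B}, l) = 0$ for $n \geq 3$, while applying it to $A \to R \to l$ gives $\HH_n(A, l, l) \cong \HH_n(R, l, l)$ for $n \geq 3$.

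Next comes the comparison $\mathrm{d}(R) = \mathrm{d}(A)$. I would run an Euler-characteristic computation on the truncated long exact sequence for $A \to R \to l$, combined with the analogous identity coming from the sequence $k \to R \otimes_A k \to l$. When the $k$-dimensions appearing in $\mathrm{d}(A)$ are translated into $l$-dimensions via \propref{FEx}, a correction involving $\HH_1(k,l,l)$ and $\Omega_{l/k}$ appears in degree one (since \propref{FEx} only applies in degrees $\geq 2$); this correction is exactly cancelled by the contribution of the regular local ring $R \otimes_A k$ in the second Euler-characteristic computation, and the two invariants $\mathrm{d}(A)$ and $\mathrm{d}(R)$ agree.

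The crucial final ingredient is $\dim_l \HH_3(\widehat{B}, l, l) = \dim_l \HH_3(R, l, l)$. Injectivity of the canonical map $\HH_3(R, l, l) \to \HH_3(\widehat{B}, l, l)$ follows from $\HH_3(R, \widehat{B}, l) = 0$ in the Jacobi-Zariski sequence of $R \to \widehat{B} \to l$; surjectivity is exactly \cite[Corollary 6]{RodicioJPAA}, already invoked in the Remark following Proposition \ref{strictImplications}: since $R \to \widehat{B}$ is surjective and $\HH_3(R, \widehat{B}, l) = 0$, the connecting homomorphism $\HH_3(\widehat{B}, l, l) \to \HH_2(R, \widehat{B}, l)$ vanishes. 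Taking the Euler characteristic of the truncated Jacobi-Zariski sequence for $R \to \widehat{B} \to l$ and collecting the previous identities then yields
\[ \mathrm{d}(f) = \mathrm{d}(R) - \mathrm{d}(\widehat{B}) + \bigl[\dim_l \HH_3(\widehat{B},l,l) - \dim_l \HH_3(R,l,l)\bigr] = \mathrm{d}(A) - \mathrm{d}(B). \]
The main obstacle I anticipate is the bookkeeping for $\mathrm{d}(A) = \mathrm{d}(R)$ in the presence of a nontrivial residue field extension $k \to l$: the identification $\HH_1(A,k,k) \otimes_k l = \HH_1(A,l,l)$ need not hold, and the terms $\Omega_{l/k}$ and $\HH_1(k,l,l)$ must be carried through both Euler-characteristic computations before they cancel; everything else is a routine application of Jacobi-Zariski together with the vanishings and citation above.
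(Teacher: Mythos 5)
Your overall architecture coincides with the paper's: pass to a Cohen factorization $A \to R \to \widehat{B}$, use \cite[Corollary 6]{RodicioJPAA} for the surjection $R \to \widehat{B}$ (legitimate, since $\HH_3(R,\widehat{B},l)=\HH_3(A,B,l)=0$) to kill the connecting map $\HH_3(\widehat{B},l,l)\to\HH_2(R,\widehat{B},l)$, and take the Euler characteristic of the resulting exact sequence to get $\mathrm{d}(f)=\mathrm{d}(R)-\mathrm{d}(\widehat{B})$; the identification $\mathrm{d}(B)=\mathrm{d}(\widehat{B})$ via \propref{C} is also fine. (Your extra $\HH_3$-terms in the truncation are harmless but unnecessary: once the connecting map vanishes you already have the six-term exact sequence beginning $0\to\HH_2(R,\widehat{B},l)$, which is what the paper uses.)

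The genuine gap is the step $\mathrm{d}(A)=\mathrm{d}(R)$. You propose to prove it by an Euler-characteristic computation on the Jacobi--Zariski sequences for $A \to R \to l$ and $k \to R\otimes_A k \to l$, carrying along ``corrections'' involving $\HH_1(k,l,l)$ and $\Omega_{l|k}$ and letting them cancel. As written this does not work: the low-degree terms that enter these sequences --- $\HH_0(A,R,l)=\Omega_{R|A}\otimes_R l \cong \Omega_{(R\otimes_A k)|k}\otimes l$, $\Omega_{l|k}$, $\HH_1(k,l,l)$, and $\HH_2(k,l,l)$ --- are in general \emph{infinite-dimensional} $l$-vector spaces (already for $R$ a power series ring over $A$, or for $l$ not finitely generated over $k$), so an alternating-sum bookkeeping in which they are supposed to ``cancel'' is not even well defined; to extract the finite identity one must analyze the actual kernels and images of the degree $\leq 1$ maps, which is precisely the nontrivial content of the additivity of the complete intersection defect along flat local homomorphisms, $\mathrm{d}(R)=\mathrm{d}(A)+\mathrm{d}(R\otimes_A k)$, proved in \cite[Proposition 3.6]{AvramovMathAnn}. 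The paper cites that result instead of reproving it; you should do the same (or reproduce Avramov's argument in full), after which your proof is complete and agrees with the paper's.
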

	\begin{proof}
		If $ A \to R \xrightarrow{r} \widehat{B}$ is a regular factorization of $ f $, then $ \mathrm{d}(B)=\mathrm{d}(\widehat{B}) $ and $ \mathrm{d}(A)=\mathrm{d}(R) $ (cf. \cite[Proposition 3.6]{AvramovMathAnn}), and also $ \HH_n( A, B, l)= \HH_n( R, \widehat{B}, l) $, so by replacing $ f $ by $  r $ we can assume that $ f $ is surjective. By \cite[Corollary 6]{RodicioJPAA} we get an exact sequence
		\begin{align*}
			0 &\to \HH_2( A, B, l) \to \HH_2( A, l, l) \to \HH_2( B, l, l) 
			\\  &\to \HH_1( A, B, l) \to \HH_1( A, l, l) \to \HH_1( B, l, l) \to 0
		\end{align*}
		from which it follows that $ \mathrm{d}(f) = \mathrm{d}(A) - \mathrm{d}(B) $.
	\end{proof}
	
	\begin{rem}
		Note that, in general, $ \mathrm{d}(f) \neq \mathrm{d}(A) - \mathrm{d}(B) $. For instance, if $ A $ is a noetherian local ring and $ I $ is an ideal of finite projective dimension which is not generated by a regular sequence, by \cite{AvramovCRAS} we have an exact sequence
		\[ 0 \to \HH_2( A, l, l) \to \HH_2( B, l, l) \to \HH_1( A, B, l) \to \HH_1( A, l, l) \to \HH_1( B, l, l) \to 0 \]
		where $ B \eqdef A/I $, showing that in this case
		\[ \mathrm{d}(f) = \mathrm{d}(A) - \mathrm{d}(B) + \dim_l \HH_2( A, B, l), \]
		but $\HH_2( A, B, l) \neq 0 $ since $ I $ is not generated by a regular sequence.
	\end{rem}
	
	\begin{defn}
		A local homomorphism $ f \colon \locala \to \localb $ of noetherian local rings is said to be \emph{moderately complete intersection} (\emph{mci} for short) if it satisfies the equivalent conditions in Proposition \ref{conditionsFCI}.
	\end{defn}
	
	\begin{prop}
		Two of the following conditions imply the third:
		\begin{enumerate}
			\item[(i)] $ f $ is \emph{mci}.
			\item[(ii)] $A$ is complete intersection.
			\item[(iii)] $B$ is complete intersection.
		\end{enumerate}
	\end{prop}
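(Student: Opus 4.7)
The plan is to leverage the characterization of complete intersection rings as those with vanishing complete intersection defect, combined with the equivalent formulation (ii) of Proposition \ref{conditionsFCI}, namely that $f$ is \emph{mci} iff $\HH_n(A,B,l)=0$ for all $n\geq 3$ and $\mathrm{d}(A)=\mathrm{d}(B)$. The work splits naturally into three implications.

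First, I would handle the direction (ii)+(iii) $\Rightarrow$ (i). Since both $A$ and $B$ are complete intersection, Lemma \ref{vanishingRegularCI}(ii) gives $\HH_n(A,B,-)=0$ for all $n\geq 3$, covering the homological half of the \emph{mci} condition. For the numerical half, since $A$ is complete intersection we have $\mathrm{d}(A)=0$ and similarly $\mathrm{d}(B)=0$, so $\mathrm{d}(A)=\mathrm{d}(B)$. Applying the equivalence of Proposition \ref{conditionsFCI}, this establishes that $f$ is \emph{mci}.

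Next, for the direction (i)+(ii) $\Rightarrow$ (iii), I would use that $f$ being \emph{mci} supplies $\mathrm{d}(A)=\mathrm{d}(B)$ via condition (ii) of Proposition \ref{conditionsFCI}. The hypothesis that $A$ is complete intersection gives $\mathrm{d}(A)=0$, hence $\mathrm{d}(B)=0$, which by the characterization recalled in the definition of $\mathrm{d}(f)$ means $B$ is complete intersection. The implication (i)+(iii) $\Rightarrow$ (ii) is completely symmetric: $\mathrm{d}(B)=0$ forces $\mathrm{d}(A)=0$.

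There is no serious obstacle here; the statement is essentially a bookkeeping consequence of the already proven Proposition \ref{conditionsFCI} and Lemma \ref{vanishingRegularCI}(ii), together with the Kiehl--Kunz/Avramov characterization $A$ complete intersection $\iff \mathrm{d}(A)=0$. The only mild subtlety is making sure that the numerical equality $\mathrm{d}(A)=\mathrm{d}(B)$ is used rather than $\mathrm{d}(f)=0$ directly, since in general these two formulations differ but under the vanishing $\HH_n(A,B,l)=0$ for $n\geq 3$ they coincide, which is exactly what Proposition \ref{conditionsFCI} provides.
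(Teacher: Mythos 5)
Your proof is correct. For (ii)+(iii) $\Rightarrow$ (i) it coincides with the paper's argument: the vanishing $\HH_n(A,B,l)=0$ for $n\geq 3$ (Lemma \ref{vanishingRegularCI}(ii)) plus $\mathrm{d}(A)=0=\mathrm{d}(B)$, fed into Proposition \ref{conditionsFCI}. For the other two implications you take a slightly different route: you use the numerical half of the \emph{mci} condition, namely $\mathrm{d}(A)=\mathrm{d}(B)$, together with the characterization that a noetherian local ring is complete intersection exactly when its defect vanishes, so complete intersection transfers between $A$ and $B$ at once. The paper instead uses only the homological half: $\HH_n(A,B,l)=0$ for $n\geq 3$ already gives $A$ complete intersection $\iff$ $B$ complete intersection via the Jacobi-Zariski sequence and the $\HH_4$-criterion \propref{CI}, exactly as in the proof of Theorem \ref{2of3}. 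Both arguments are immediate from what is already established; the paper's version makes visible that the ascent/descent of the complete intersection property needs only the \emph{qci} part of the hypothesis, while yours stays entirely within the defect formalism and correctly flags the one subtlety, that $\mathrm{d}(f)=\mathrm{d}(A)-\mathrm{d}(B)$ only under the vanishing hypothesis, which is precisely what Proposition \ref{conditionsFCI} supplies.
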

	\begin{proof}
		Assuming $ \HH_n( A, B, l)=0 $ for all $ n \geq 3 $, we have that $ A $  is complete intersection if and only if $ B $ is. Thus, it is enough to show that if $ A $ and $ B $ are complete intersection then $ f $ is \emph{mci}. We know already that $ \HH_n( A, B, l)=0 $ for all $ n \geq 3 $, and $ A $ and $ B $ being complete intersection just means that $ \mathrm{d}(A)=0=\mathrm{d}(B) $.
	\end{proof}
	
	\begin{lem} \label{lemmaGradeDepth}
		Let $ f \colon A \to B $ be a surjective homomorphism of noetherian local rings, with $ I = \ker(f) $. If $ \HH_n( A, B, l)=0 $ for all $ n \geq 3 $, then:
		\begin{enumerate}
			\item[(i)] $ \mathrm{d}(f) = \dim A - \dim B - \mathrm{grade}(I) $.
			\item[(ii)] $ \mathrm{depth}(A) - \mathrm{depth}(B) = \mathrm{grade}(I) $.
		\end{enumerate}
	\end{lem}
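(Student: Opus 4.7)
The proof of both parts rests on a structural consequence of the hypothesis: when $f$ is surjective and $\HH_n(A,B,l)=0$ for all $n\geq 3$, the first Koszul homology $\HH^{Kos}_1(I)$ is a free $B$-module (the result of \cite{AnPairs} invoked several times in this paper), and in fact
\[
\mathrm{rank}_B \HH^{Kos}_1(I) \;=\; \dim_l \HH_2(A,B,l) \;=\; \mu(I) - \mathrm{grade}(I),
\]
where $\mu(I) = \dim_l(I/\mathfrak{m}I)$ is the minimal number of generators of $I$. The first equality is part of André's statement; the second is the classical grade-generator identity, valid whenever $\HH^{Kos}_1(I)$ is $B$-free.

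For (i), the key observation is that, since $f$ is surjective, the trivial diagram $A \xrightarrow{\mathrm{id}} A \xrightarrow{f} B$ already is a regular factorization of $f$: the identity is flat with closed fibre $k$, which is trivially regular. Applying the definition of $\mathrm{d}(f)$ to this factorization and using \propref{C} to identify $\HH_n(A,\widehat{B},l)$ with $\HH_n(A,B,l)$ yields
\[
\mathrm{d}(f) \;=\; \dim_l \HH_2(A,B,l) - \dim_l \HH_1(A,B,l) + \dim A - \dim B.
\]
By \propref{1DG}, $\dim_l \HH_1(A,B,l) = \mu(I)$. Substituting the value of $\dim_l \HH_2(A,B,l)$ recalled above gives $\mathrm{d}(f) = \dim A - \dim B - \mathrm{grade}(I)$, which is (i).

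For (ii), I would exploit the Koszul complex $K_\bullet = K(\underline{x}; A)$ on a minimal generating set $\underline{x}$ of $I$. Since $\HH^{Kos}_1(I)$ is $B$-free of rank $r = \mu(I) - \mathrm{grade}(I)$, the full Koszul homology is the exterior $B$-algebra on it, so $\HH_i(K) \cong B^{\binom{r}{i}}$ and vanishes for $i > r$. The acyclic top portion $0 \to K_n \to \cdots \to K_{r+1} \to L_r \to 0$, with $L_r \eqdef \mathrm{im}(d_{r+1})$, is then a minimal $A$-free resolution of $L_r$ of length $g-1$, so the Auslander--Buchsbaum formula yields $\mathrm{depth}(L_r) = \mathrm{depth}(A) - g + 1$. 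Propagating depth downwards through the short exact sequences
\[
0 \to L_i \to Z_i \to \HH_i(K) \to 0, \qquad 0 \to Z_{i+1} \to K_{i+1} \to L_i \to 0,
\]
and using $\mathrm{depth}_A K_j = \mathrm{depth}(A)$ together with $\mathrm{depth}_A \HH_i(K) = \mathrm{depth}(B)$ (all $\HH_i(K)$ being $B$-free), one arrives at $\mathrm{depth}(A) - \mathrm{depth}(B) = g = \mathrm{grade}(I)$.

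The hard part will be closing the one-sided bounds of the depth lemma into equalities in (ii); the reverse inequality $\mathrm{depth}(A) \geq \mathrm{depth}(B) + \mathrm{grade}(I)$ in particular requires essential use of the fact that \emph{every} $\HH_i(K)$ is $B$-free, so that no depth is lost along the chain of exact sequences. Alternatively, once (i) is established, (ii) is equivalent to the identity $\mathrm{d}(f) = \mathrm{coprof}(A) - \mathrm{coprof}(B)$, which can itself be deduced from the same Koszul-theoretic structure.
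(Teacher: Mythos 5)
The main gap is in your justification of the rank identity in (i). What is classical (the grade sensitivity of the Koszul complex, \cite[1.6.17(b)]{BruHer}) is that $\mu(I)-\mathrm{grade}(I)$ equals the \emph{top nonvanishing degree} of the Koszul homology $\HH^{Kos}_{*}(I)$. To identify that top degree with $\mathrm{rank}_B\,\HH^{Kos}_1(I)$ you need to know the whole Koszul homology, e.g.\ that the canonical map $\bigwedge^{*}_B \HH^{Kos}_1(I)\to \HH^{Kos}_{*}(I)$ is bijective; mere freeness of $\HH^{Kos}_1(I)$ gives neither $\HH^{Kos}_i(I)=0$ for $i>\mathrm{rank}\,\HH^{Kos}_1(I)$ nor nonvanishing in that degree, and ``$\mathrm{rank}\,\HH^{Kos}_1(I)=\mu(I)-\mathrm{grade}(I)$ whenever $\HH^{Kos}_1(I)$ is free'' is not a classical identity. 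The hypothesis $\HH_n(A,B,l)=0$ for $n\geq 3$ does supply exactly this missing structure, via \cite[Corollary 3']{BlancoMajadasRodicioJPAA} (or \cite{AvramovHenriquesSega}), which is the route the paper takes — and which you in fact silently assume in part (ii) when you write $\HH_i(K)\cong B^{\binom{r}{i}}$. So (i) is fixable, but as written its key step is unjustified. (A minor further point: $\mathrm{d}(f)$ is defined through regular factorizations with target $\widehat{B}$, and $A\to\widehat{B}$ is not surjective, so the ``trivial'' factorization should be $A\to\widehat{A}\to\widehat{B}$; with \propref{C} this gives the same displayed formula.)

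Part (ii) is not actually proved. Your Auslander--Buchsbaum computation of $\mathrm{depth}(L_r)$ is fine, but the descending chase through the sequences $0\to L_i\to Z_i\to \HH_i(K)\to 0$ and $0\to Z_{i+1}\to K_{i+1}\to L_i\to 0$ only yields the one-sided depth-lemma bounds, and you explicitly defer ``the hard part'' of closing them into the equality $\mathrm{depth}(A)-\mathrm{depth}(B)=\mathrm{grade}(I)$; freeness of the $\HH_i(K)$ over $B$ does not by itself prevent depth jumps in such a chase (one needs genuine extra input, e.g.\ vanishing/injectivity of connecting maps or the acyclicity of the Tate complex from \cite{BlancoMajadasRodicioJPAA}). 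The paper disposes of (ii) by citing the proofs of Corollaries 5 and 6 of \cite{GarciaSoto}, where precisely this work is done; your alternative remark that (ii) follows from ``$\mathrm{d}(f)=\mathrm{coprof}(A)-\mathrm{coprof}(B)$'' is likewise only an assertion, since that identity is equivalent to what is to be shown once (i) is known. So as it stands the proposal establishes neither the depth equality nor, strictly, the rank identity it rests on.
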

	\begin{proof}
		(i). Let $ \HH^{Kos}_1(I) $ be the first Koszul homology module associated with a set of generators of $ I $. By \cite[Corollary 3']{BlancoMajadasRodicioJPAA}, $ \HH^{Kos}_1(I) $ is a free $ B $-module and the canonical homomorphism
		\[ \bigwedge^{*}_B \HH^{Kos}_1(I) \to \HH^{Kos}_{\ast}(I)  \]
		is an isomorphism. Therefore, by \cite[1.6.17(b)]{BruHer}, one has
		\[ \mathrm{rank } \, \HH^{Kos}_1(I) = \mu(I) - \mathrm{grade}(I), \]
		where $ \mu(I) $ is the number of generators of a minimal generating set of $ I $. Since $ \HH^{Kos}_1(I) \otimes_B k \cong \HH_2( A, B, l) $ (cf. \cite[15.12]{An1974}) and $ \HH_1( A, B, l) = I/I^2 \otimes_B l $, by definition of $ \mathrm{d}(f) $
		\[ \mathrm{d}(f) = (\mu(I) - \mathrm{grade}(I)) - \mu(I) + \dim A - \dim B. \]
		
		(ii). This result appears in the proofs of Corollaries 5 and 6 of \cite{GarciaSoto}.
	\end{proof}
	
	\begin{thm} \label{ascentDescentMCI}
		Let $ f \colon \locala \to \localb $ be a local homomorphism of noetherian local rings. If $ f $ is \emph{mci}, then:
		\begin{enumerate}
			\item[(i)] $ A $ is Gorenstein $\iff$ $ B $ is Gorenstein
			\item[(ii)] $ A $ is Cohen-Macaulay $ \iff $ $ B $ is Cohen-Macaulay
		\end{enumerate}
	\end{thm}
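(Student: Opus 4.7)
The plan is to work over a Cohen factorization $A \to R \xrightarrow{r} \widehat{B}$ of $f$. Since $A \to R$ is flat with regular closed fibre, the standard argument (base change \propref{BC} to the closed fibre, combined with \propref{Reg}) gives $\HH_n(A,R,-)=0$ for all $n \geq 2$; then the Jacobi-Zariski sequence together with \propref{C} yields $\HH_n(R,\widehat{B},l) = \HH_n(A,B,l) = 0$ for all $n \geq 3$. Moreover, by Proposition \ref{eqCIDefect}, $\mathrm{d}(r)=\mathrm{d}(f)=0$. For the Gorenstein equivalence (i) and for the ascent direction $A$ Cohen-Macaulay $\Rightarrow B$ Cohen-Macaulay of (ii), I would argue exactly as in Theorem \ref{ascentDescentFCI}: the surjective case for $r$ is the result of \cite{GarciaSoto}, extended to non-surjective $f$ via \cite{AvramovHenriquesSega}, while the passages $A \leftrightarrow R$ (flat local with regular closed fibre) and $B \leftrightarrow \widehat{B}$ (completion) preserve both the Gorenstein and the Cohen-Macaulay properties.

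The substantive new point is the descent $B$ Cohen-Macaulay $\Rightarrow A$ Cohen-Macaulay, and this is where the hypothesis $\mathrm{d}(f)=0$ really enters. Setting $I := \ker(r)$, Lemma \ref{lemmaGradeDepth}(i) applied to the surjection $r$ together with $\mathrm{d}(r)=0$ gives
$$\mathrm{grade}(I) = \dim R - \dim \widehat{B},$$
and part (ii) of the same lemma gives $\mathrm{depth}(R) - \mathrm{depth}(\widehat{B}) = \mathrm{grade}(I)$. Subtracting these two identities,
$$\dim R - \mathrm{depth}(R) = \dim \widehat{B} - \mathrm{depth}(\widehat{B}),$$
i.e., $\mathrm{coprof}(R) = \mathrm{coprof}(\widehat{B}) = \mathrm{coprof}(B)$. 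Combined with the standard equality $\mathrm{coprof}(R) = \mathrm{coprof}(A)$ for the flat local $A \to R$ with regular closed fibre (already used in the proof of Theorem \ref{ascentDescentFCI}), this yields $\mathrm{coprof}(A) = \mathrm{coprof}(B)$, giving both directions of (ii) simultaneously.

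The main obstacle has essentially already been dealt with in Lemma \ref{lemmaGradeDepth}: once $\mathrm{d}(f)=0$ is available, the Cohen-Macaulay descent reduces to the one-line arithmetic identity above. The reason the analogous descent fails for mere qci homomorphisms is precisely that one could have $\mathrm{grade}(I) < \dim R - \dim \widehat{B}$ (so $R$ and hence $A$ fail to be Cohen-Macaulay while $\widehat{B}$ is). This is exactly what the vanishing of $\mathrm{d}(f)$ rules out, and it is the entire reason for singling out the mci condition.
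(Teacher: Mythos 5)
Your proof is correct and follows essentially the same route as the paper: reduce the Gorenstein statement and Cohen--Macaulay ascent to the qci results of \cite{GarciaSoto} and \cite{AvramovHenriquesSega} as in Theorem \ref{ascentDescentFCI}, then handle the descent by passing to a Cohen factorization (where $\mathrm{d}(r)=\mathrm{d}(f)=0$ by definition) and applying Lemma \ref{lemmaGradeDepth} to get $\mathrm{coprof}(A)=\mathrm{coprof}(R)=\mathrm{coprof}(\widehat{B})=\mathrm{coprof}(B)$. The paper's proof is just a terser version of exactly this argument, so there is nothing to add.
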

	\begin{proof}
		As in the proof of Theorem \ref{ascentDescentFCI}, we need only show that if $ B $ is Cohen-Macaulay, then so is $ A $. By taking a Cohen factorization of $ f $, we may also assume that $ f $ is surjective. But then Lemma \ref{lemmaGradeDepth} gives $ \mathrm{coprof}(A) = \mathrm{coprof}(B) $.
	\end{proof}
	
	\begin{rem}
		Note that $ \HH_n( A, B, -)=0 $ for all $ n \geq 2 $ implies that $ f $ is \emph{mci}. Indeed, we can assume that $ f $ is surjective and, if $ I = \ker(f) $, then $ \dim_l \HH_1( A, B, l)= \dim_l (I/I^2 \otimes_B l) = \dim A - \dim B $ since $ I $ is generated by a regular sequence. Therefore, from the exact sequence
		\[ 0 \to \HH_2( A, l, l) \to \HH_2( B, l, l) \to \HH_1( A, B, l) \to \HH_1( A, l, l) \to \HH_1( B, l, l) \to 0 \]
		we can deduce that $ \mathrm{d}(A) = \mathrm{d}(B). $
		
		The converse is not true, as shown by a surjective homomorphism of complete intersections whose kernel is not generated by a regular sequence (e.g. $ A $ being complete intersection but not regular and $ B $ its residue field). 
	\end{rem}
	
	\begin{rem}
		We do not know whether every homomorphism $ f \colon A \to B $ such that $ \HH_n( A, B, l)=0 $ for all $ n \geq 3 $ is \emph{mci}. By Lemma \ref{lemmaGradeDepth}, this question is equivalent to Conjecture 4.2 in \cite{AvramovHenriquesSega}.
	\end{rem}
	
	\section{Reasonably complete intersection homomorphisms} \label{sectionRCI}
	In this section, we study another class of homomorphisms, which we call \emph{reasonably complete intersection} (\emph{rci} for short) homomorphisms. We will show in Proposition \ref{relationRCIMCIFCI} that every \emph{rci} homomorphism is also \emph{fci} and \emph{mci}, so \emph{rci} homomorphisms satisfy the properties that both of these do. Despite this, we will also give direct proofs for the main properties to showcase their simplicity.
	
	\begin{defn} \label{defRCI}
		A local homomorphism $ f \colon \locala \to \localb $ of noetherian local rings is said to be \emph{reasonably complete intersection} (\emph{rci} for short)  if there exists a regular factorization 
		\[ A \to R \to \widehat{B} \]
		and a surjective homomorphism of noetherian local rings $ Q \to R $ such that 
		\[ \HH_n(Q,R, l)=0=\HH_n(Q, \widehat{B}, l)  \quad \text{for all } n \geq 2.\]
	\end{defn}
	
	\begin{rem}
		If $ f $ is \emph{rci} then $ \HH_n( A, B, l)=0 $ for all $ n \geq 3 $. Indeed, $ \HH_n( A, B, l) = \HH_n( R, \widehat{B}, l) $ for all $ n \geq 2 $ (cf. \cite[Lemma 1.7]{AvramovAnnals}), and now use the Jacobi-Zariski exact sequence
		\[ \cdots \to \HH_n( Q, \widehat{B}, l) \to \HH_n( R, \widehat{B}, l) \to \HH_{n-1}( Q, R, l) \to \cdots \]
	\end{rem}
	
	\begin{prop} \label{prop2of3RCI}
		For a local homomorphism $ f \colon \locala \to \localb $ of noetherian local rings, any two of the following properties imply the third one:
		\begin{enumerate}
			\item[(i)] $ f $ is \emph{rci}.
			\item[(ii)] $A$ is a complete intersection ring.
			\item[(iii)] $B$ is a complete intersection ring.
		\end{enumerate} 
	\end{prop}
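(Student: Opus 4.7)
The plan is to address the three implications separately. For the two implications that assume $f$ is \emph{rci}, the remark immediately preceding the statement gives $\HH_n(A,B,l)=0$ for every $n\geq 3$, so the \propref{JZ} exact sequence for $A\to B\to l$ contains the fragment
\[0=\HH_4(A,B,l)\to\HH_4(A,l,l)\to\HH_4(B,l,l)\to\HH_3(A,B,l)=0,\]
which yields an isomorphism $\HH_4(A,l,l)\cong\HH_4(B,l,l)$. Combined with the isomorphism $\HH_4(A,k,k)\otimes_k l\cong\HH_4(A,l,l)$ coming from \propref{FEx} and faithful flatness of $k\to l$, this gives $\HH_4(A,k,k)=0\iff\HH_4(B,l,l)=0$. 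By \propref{CI}, this is exactly ``$A$ is complete intersection iff $B$ is complete intersection'', settling both (i)$+$(ii)$\Rightarrow$(iii) and (i)$+$(iii)$\Rightarrow$(ii).

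For (ii)$+$(iii)$\Rightarrow$(i), the plan is to build the data of Definition \ref{defRCI} by hand. I would take a Cohen factorization $A\to R\to\widehat{B}$ of the induced map $A\to\widehat{B}$. Since $A$ is complete intersection, so is $\widehat{A}$ by \propref{C}; as $\widehat{A}\to R$ is flat local with regular closed fibre, ascent of complete intersection along such maps forces $R$ to be complete intersection. Because $R$ is complete, Cohen's structure theorem supplies a surjection $Q\twoheadrightarrow R$ from a regular local ring $Q$, whose residue field is $l$ (the common residue field of $R$ and $\widehat{B}$).

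The crucial auxiliary fact is that whenever $Q$ is regular local and $S$ is complete intersection local with common residue field $l$, every surjection $Q\to S$ has kernel generated by a regular sequence. To verify this, I would apply \propref{JZ} to $Q\to S\to l$: since $\HH_n(Q,l,l)=0$ for $n\geq 2$ by \propref{Reg}, the sequence produces a surjection $\HH_3(S,l,l)\twoheadrightarrow\HH_2(Q,S,l)$; the source vanishes by \propref{CI}, so \propref{RS} yields $\HH_n(Q,S,l)=0$ for every $n\geq 2$. Applying this observation both to $Q\to R$ and to the composite surjection $Q\to R\to\widehat{B}$ (which is still a surjection from a regular local ring onto a complete intersection) produces the required vanishing $\HH_n(Q,R,l)=0=\HH_n(Q,\widehat{B},l)$ for every $n\geq 2$, so $f$ is \emph{rci}.

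The main obstacle I anticipate is precisely the insistence on a \emph{single} $Q$ presenting both $R$ and $\widehat{B}$ with regular-sequence kernels; the auxiliary fact above is what removes this difficulty, since once a surjection $Q\twoheadrightarrow R$ from a regular local ring is fixed, the composite is automatically a good presentation of the complete intersection $\widehat{B}$.
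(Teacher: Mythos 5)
Your proposal is correct. One of the three implications is handled exactly as in the paper: for (ii)$+$(iii)$\Rightarrow$(i) the paper also takes a Cohen factorization $A \to R \to \widehat{B}$, notes $R$ is complete intersection by flat ascent along the regular-fibre map, and presents $R$ as a quotient of a regular local ring $Q$; your ``auxiliary fact'' (regular $Q$ surjecting onto a complete intersection $S$ forces $\HH_2(Q,S,l)=0$, hence $\HH_n(Q,S,l)=0$ for $n\geq 2$ by \propref{RS}) is precisely the verification the paper leaves implicit behind ``we get the desired result,'' and your worry about needing a single $Q$ for both $R$ and $\widehat{B}$ is resolved the same way the paper resolves it. Where you genuinely diverge is in the two implications that assume $f$ is \emph{rci}: you pass through the remark that \emph{rci} implies $\HH_n(A,B,l)=0$ for $n\geq 3$ and then run the Jacobi-Zariski sequence for $A \to B \to l$ together with the $\HH_4$ criterion of \propref{CI}, i.e.\ you reuse the argument of Theorem \ref{2of3}; the paper instead works directly with the rings $Q$, $R$, $\widehat{B}$ of Definition \ref{defRCI}, transferring the complete intersection property step by step ($A$ ci $\Rightarrow R$ ci by flat ascent $\Rightarrow Q$ ci since $R=Q/(\text{regular sequence})$ $\Rightarrow \widehat{B}$ ci, and in the other direction ending with flat descent along $A \to R$). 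Your route is more uniform (both implications follow at once from the isomorphism $\HH_4(A,l,l)\cong\HH_4(B,l,l)$), but it leans on the preceding remark, whose proof invokes \cite[Lemma 1.7]{AvramovAnnals}; the paper's route stays entirely inside the data of the definition and uses only regular sequences plus flat ascent/descent of the complete intersection property. A minor remark: your detour through $\widehat{A}$ when showing $R$ is complete intersection is harmless but unnecessary, since $A \to R$ is already flat local with regular closed fibre and ascent applies directly, as the paper does.
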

	\begin{proof}
		(ii) + (iii) $ \Rightarrow $ (i). Take a Cohen factorization $ A \to R \to \widehat{B} $ (in particular, $ R $ is complete). Since $ A $ is complete intersection and the closed fibre of $ A \to R $ is regular, $ R $ is also complete intersection. Taking a regular local ring $ Q $ such that $ R  = Q/I $, we get the desired result.
		
		(i) + (ii) $ \Rightarrow $ (iii). Let $ R $ and $ Q $ be as in Definition \ref{defRCI}. $ A $ is complete intersection, hence $ R $ is too, and therefore so is $ Q $ and, finally, so is $ \widehat{B} $.
		
		(i) + (iii) $ \Rightarrow $ (ii). Let $ R $ and $ Q $ be as in Definition \ref{defRCI}. From $ B $ being complete intersection and $ \HH_2(Q, \widehat{B}, l)=0 $, we deduce that $ Q $ is also complete intersection. Now, since $ \HH_2( Q, R, l)=0 $, so too is $ R $ complete intersection, and therefore also $ A $ by flat descent.
	\end{proof}
	
	\begin{rem}
		Proposition \ref{prop2of3RCI} would still hold should we change ``for all $ n \geq 2 $'' to ``for all $ n \geq 3 $'' in condition $ \HH_n( Q, \widehat{B}, l)=0 $ and/or $ \HH_n( Q, R, l)=0 $ in Definition \ref{defRCI}.
	\end{rem}
	
	\begin{prop}
		Let $ f \colon \locala \to \localb $ be a local homomorphism  of noetherian local rings. If $ f $ is \emph{rci}, then:
		\begin{enumerate}
			\item[(i)] $ A $ is Gorenstein if and only if $ B $ is Gorenstein.
			\item[(ii)] $ A $ is Cohen-Macaulay if and only if $ B $ is Cohen-Macaulay.
		\end{enumerate}
	\end{prop}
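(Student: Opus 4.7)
The plan is to use the regular factorization $A \to R \to \widehat{B}$ and the surjection $Q \twoheadrightarrow R$ furnished by Definition \ref{defRCI} as a bridge that transports the Gorenstein and Cohen-Macaulay properties between $A$ and $B$. Since $R \to \widehat{B}$ is surjective and local, the residue fields of $Q$, $R$ and $\widehat{B}$ all coincide with $l$, so the hypothesis $\HH_n(Q,R,l)=0=\HH_n(Q,\widehat{B},l)$ for $n\geq 2$ refers to a common residue field of all the rings appearing.

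First I would apply \propref{RS} to each of the surjections $Q \twoheadrightarrow R$ and $Q \twoheadrightarrow \widehat{B}$: the vanishing of $\HH_2$ with coefficients in $l$ implies that both kernels are generated by regular sequences. This is the only way the \emph{rci} hypothesis enters the argument; once these two facts are extracted, the homological content is exhausted.

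Next I would chain four standard preservation results along the diagram $A \to R \twoheadleftarrow Q \twoheadrightarrow \widehat{B} \leftarrow B$. The flat local homomorphism $A \to R$ has regular (hence Gorenstein and Cohen-Macaulay) closed fibre, so it preserves both properties in either direction. The surjections $Q \twoheadrightarrow R$ and $Q \twoheadrightarrow \widehat{B}$ are quotients by regular sequences, which again preserve both properties in either direction. Finally, $B \to \widehat{B}$ is faithfully flat with trivial closed fibre, so $B$ is Gorenstein (resp.\ Cohen-Macaulay) if and only if $\widehat{B}$ is. Composing these equivalences yields $A \text{ Gor.} \iff R \text{ Gor.} \iff Q \text{ Gor.} \iff \widehat{B} \text{ Gor.} \iff B \text{ Gor.}$, and analogously for Cohen-Macaulay.

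There is no serious obstacle: every preservation statement invoked is classical. The only point requiring attention is matching the residue fields across the diagram so that \propref{RS} applies uniformly, and this is automatic by the surjectivity of $R \to \widehat{B}$. This argument is indeed considerably shorter than deducing the statement from the analogous theorems for \emph{fci} or \emph{mci} homomorphisms, in line with the remark opening this section that \emph{rci} homomorphisms admit direct, simple proofs of their main properties.
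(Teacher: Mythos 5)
Your proof is correct and is essentially the paper's own argument: the paper proves this by running the same chain used in Proposition \ref{prop2of3RCI}, namely transferring the property along $A \to R$ (flat with regular closed fibre), $Q \twoheadrightarrow R$ and $Q \twoheadrightarrow \widehat{B}$ (kernels generated by regular sequences via \propref{RS}), and $B \to \widehat{B}$, exactly as you do with Gorenstein and Cohen--Macaulay in place of complete intersection. The only difference is cosmetic: the paper also notes an alternative deduction from Proposition \ref{relationRCIMCIFCI} and Theorem \ref{ascentDescentMCI}, which you deliberately bypass.
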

	\begin{proof}
		It is the same proof as in (i)+(ii) $\Rightarrow$ (iii) and (i)+(iii)$\Rightarrow$(ii) in Proposition \ref{prop2of3RCI}. Alternatively, the result follows from Proposition \ref{relationRCIMCIFCI} and Theorem \ref{ascentDescentMCI}.
	\end{proof}
	
	The following proposition summarizes what we know about the relationships between these concepts. Following \cite{AvramovAnnals} and \cite{AvramovHenriquesSega}, we will say that $ f \colon \locala \to \localb$ is \emph{ci} (complete intersection) if $ \HH_n( A, B, l)=0 $ for all $n \geq 2 $, and that $ f $ is \emph{qci} (quasi-complete intersection) if  $ \HH_n( A, B, l)=0 $ for all $n \geq 3 $. 
	
	\begin{prop} \label{relationRCIMCIFCI}
		Let $ f \colon \locala \to \localb $ be a local homomorphism  of noetherian local rings. Then, the properties that $ f $ can satisfy behave according to the following diagram:
		\begin{center}
			\begin{tikzcd}
				& & mci & 
				\\ci & rci & & qci
				\\ & & fci &  
				
				\arrow[from=2-1, to=2-2, Rightarrow, shift left=1.5]
				\arrow[from=2-2, to=2-1, negated, Rightarrow, shift left=1.5]
				\arrow[from=2-2, to=1-3, Rightarrow, shift left=1.5]
				\arrow[from=1-3, to=2-2, negated, Rightarrow, shift left=1.5]
				\arrow[from=2-2, to=3-3, Rightarrow]
				\arrow[from=1-3, to=2-4, Rightarrow]
				\arrow[from=3-3, to=2-4, Rightarrow, shift left=1.5]
				\arrow[from=2-4, to=3-3, negated, Rightarrow, shift left=1.5]
			\end{tikzcd}
		\end{center}
	\end{prop}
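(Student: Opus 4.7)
The plan is to verify each of the eight assertions encoded in the diagram: six positive implications and two non-implications. Three of them are essentially free: \emph{mci} $\Rightarrow$ \emph{qci} is built into the definition of \emph{mci}, \emph{fci} $\Rightarrow$ \emph{qci} is Proposition~\ref{strictImplications}, and \emph{qci} $\not\Rightarrow$ \emph{fci} is Remark~\ref{counterExampleH3fci}. The substantive steps are \emph{ci} $\Rightarrow$ \emph{rci}, \emph{rci} $\Rightarrow$ \emph{mci}, \emph{rci} $\Rightarrow$ \emph{fci}, and the two remaining counterexamples.

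For \emph{ci} $\Rightarrow$ \emph{rci}, I would pick any Cohen factorization $A \to R \to \widehat{B}$ and apply \cite[Lemma 1.7]{AvramovAnnals} together with the hypothesis $\HH_n(A,B,l)=0$ for $n\ge 2$ to conclude $\HH_n(R,\widehat{B},l)=0$ for $n\ge 2$. Taking $Q\eqdef R$ with the identity surjection then fulfils Definition~\ref{defRCI} trivially. For \emph{rci} $\Rightarrow$ \emph{mci} the qci half is the Remark after Definition~\ref{defRCI}; for $\mathrm{d}(f)=0$ I would invoke \propref{JZ} for $Q\to R\to\widehat{B}$. Since $\HH_n(Q,R,l)=\HH_n(Q,\widehat{B},l)=0$ for $n\ge 2$ and $\HH_0(Q,R,l)=0$ by \propref{0DG}, the Jacobi--Zariski sequence truncates to
\[
0\to \HH_2(R,\widehat{B},l)\to \HH_1(Q,R,l)\to \HH_1(Q,\widehat{B},l)\to \HH_1(R,\widehat{B},l)\to 0.
\]
By \propref{RS} the kernels of $Q\to R$ and $Q\to\widehat{B}$ are regular sequences, so $\dim_l \HH_1(Q,R,l)=\dim Q-\dim R$ and $\dim_l\HH_1(Q,\widehat{B},l)=\dim Q-\dim B$; the alternating-sum identity yields $\dim_l\HH_2(R,\widehat{B},l)-\dim_l\HH_1(R,\widehat{B},l)=\dim B-\dim R$, i.e.\ $\mathrm{d}(f)=0$.

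For \emph{rci} $\Rightarrow$ \emph{fci}, my strategy is to verify condition~(i) of Proposition~\ref{KoszulFreeness} for the given factorization $A\to R\to\widehat{B}$. Taking $R'\eqdef R$ in the definition of $\cidim_R(\widehat{B})$ and keeping $Q$ from the rci data, \propref{RS} tells us that $\ker(Q\to R)$ and $\ker(Q\to\widehat{B})$ are regular sequences, so $\pdim_Q(\widehat{B})<\infty$ and $\cidim_R(\widehat{B})<\infty$. At the same time, the same Jacobi--Zariski argument as above gives $\HH_n(R,\widehat{B},l)=0$ for $n\ge 3$, so $\HH^{Kos}_1(I)$ is free over $\widehat{B}$ by \cite[Proposition]{AnPairs}, and Proposition~\ref{KoszulFreeness} then delivers that $f$ is \emph{fci}.

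The counterexample \emph{rci} $\not\Rightarrow$ \emph{ci} is clean: take $A$ a complete intersection that is not regular and $B=k$; by Proposition~\ref{prop2of3RCI} (direction (ii)+(iii)$\Rightarrow$(i)) the map $A\to k$ is \emph{rci}, yet $\HH_2(A,k,k)\ne 0$ by \propref{Reg}, so it is not \emph{ci}. The real obstacle is \emph{mci} $\not\Rightarrow$ \emph{rci}: my plan is to use a surjective homomorphism $A\to B$ as in \cite[Theorem~3.5]{AvramovHenriquesSega} with $\HH_n(A,B,-)=0$ for all $n\ge 3$ and $\cidim_A(B)=\infty$; by Proposition~\ref{fciKoszul} such a map fails to be \emph{fci}, hence cannot be \emph{rci} in view of the implication just established. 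The delicate point is to secure the \emph{mci} half, i.e.\ $\mathrm{d}(f)=0$: Lemma~\ref{lemmaGradeDepth} identifies this with $\mathrm{coprof}(A)=\mathrm{coprof}(B)$, so the main work is to arrange or verify the Avramov--Henriques--\c{S}ega construction so that $A$ and $B$ are simultaneously Cohen--Macaulay (or otherwise have matching coprofundity).
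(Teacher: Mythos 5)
Your treatment of the six implications and of \emph{qci} $\not\Rightarrow$ \emph{fci} matches the paper's proof: \emph{ci} $\Rightarrow$ \emph{rci} via a Cohen factorization with $Q\eqdef R$, \emph{rci} $\not\Rightarrow$ \emph{ci} via $A$ complete intersection non-regular and $B=k$, and \emph{rci} $\Rightarrow$ \emph{mci} by the same truncated Jacobi--Zariski sequence and the count $\dim_l\HH_1(Q,R,l)=\dim Q-\dim R$, $\dim_l\HH_1(Q,\widehat B,l)=\dim Q-\dim B$ coming from \propref{RS}. The one place you take a different route is \emph{rci} $\Rightarrow$ \emph{fci}: you verify condition (i) of Proposition~\ref{KoszulFreeness} (with $R'=R$, the $Q$ from Definition~\ref{defRCI}, $\pdim_Q(\widehat B)<\infty$ from \propref{RS}, and freeness of $\HH^{Kos}_1(I)$ from $\HH_3(R,\widehat B,l)=0$ and \cite{AnPairs}) and then invoke (i) $\Rightarrow$ (ii) of that proposition. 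This is precisely the route the paper acknowledges as ``easy to see'' but then bypasses in favour of a short direct argument (taking $A'=R$ in Definition~\ref{defMCI} and showing the map $\HH_n(Q,B\otimes_AR,l)\to\HH_n(R,\widehat B\otimes_AR,l)$ vanishes by naturality, using the multiplication $\widehat B\otimes_AR\to\widehat B$ and \cite[Lemma 1.7]{AvramovAnnals}); your version is correct but leans on the comparatively heavy proof of Proposition~\ref{KoszulFreeness}, whereas the paper's is self-contained.

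The only loose end is the \emph{mci} half of \emph{mci} $\not\Rightarrow$ \emph{rci}, which you leave as ``arrange or verify'' that the example of \cite[Theorem 3.5]{AvramovHenriquesSega} has $\mathrm{coprof}(A)=\mathrm{coprof}(B)$. This closes immediately from the cited construction: there $A$ has Krull dimension $0$ and $B=A/(x)$ for an exact zero-divisor $x$, so both rings are artinian, hence $\dim A=\dim B=0$ and $\mathrm{grade}(I)=0$, and Lemma~\ref{lemmaGradeDepth}(i) gives $\mathrm{d}(f)=0$; the hypothesis $\HH_n(A,B,-)=0$ for $n\ge 3$ needed for that lemma follows from the freeness of $\HH^{Kos}_1(x)=(0\colon x)$ via \cite[Corollary 3']{BlancoMajadasRodicioJPAA}. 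With that inserted, your argument for this item coincides with the paper's: the map is \emph{mci}, fails to be \emph{fci} by Proposition~\ref{fciKoszul}, and hence fails to be \emph{rci} by the implication \emph{rci} $\Rightarrow$ \emph{fci}.
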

	\begin{proof}
		It remains to be shown that:
		\begin{enumerate}
			\item[(a)] \emph{ci} $ \Rightarrow $ \emph{rci}. If $ \HH_n( A, B, l)=0 $ for all $ n \geq 2 $, then by taking a Cohen factorization we have $ \HH_n( R, \widehat{B}, l)=0 $ for all $ n \geq 2 $. Hence, it suffices to take $ Q  \eqdef R$. 
			\item[(b)] \emph{rci} $ \nRightarrow $ \emph{ci}. If $ A $ and $ B $ are complete intersection, then $ f $ is \emph{rci}.
			\item[(c)] \emph{rci} $\Rightarrow$ \emph{fci}. It is easy to see that \emph{rci} implies condition (i) in Proposition \ref{KoszulFreeness} and then, by that same Proposition, \emph{rci} implies \emph{fci}. However, we can give a short and direct proof: let $ R $ and $ Q $ be as in Definition \ref{defRCI} and take $ A'=R $ in Definition \ref{defMCI}. We have $ \HH_n( Q, R, l)=0 $ for all $ n \geq 2 $, so by the Jacobi-Zariski exact sequence associated with $ Q \to R \to B \otimes_A R $
			\[ \cdots \to \HH_n( Q, R, l) \to \HH_n( Q, B \otimes_A R, l) \xrightarrow{\gamma_n} \HH_n( R, \widehat{B} \otimes_A R, l) \to \cdots \]
			we obtain $ \HH_n( Q, B \otimes_A R, l)=0 $ for all $ n \geq 2 $, because $ \gamma_n=0 $ for all $ n \geq 2 $ as shown by the naturality of André-Quillen homology with respect to
			\begin{center}
				\begin{tikzcd}
					Q & R & \widehat{B} \otimes_A R
					\\Q & R & \widehat{B}
					
					\arrow[from=1-1, to=1-2]
					\arrow[from=1-2, to=1-3]
					\arrow[from=1-3, to=2-3]
					\arrow[from=1-1, to=2-1, equal]
					\arrow[from=2-1, to=2-2]
					\arrow[from=2-2, to=2-3]
					\arrow[from=1-2, to=2-2, equal]
				\end{tikzcd}
			\end{center}
			(where the homomorphism $ \widehat{B} \otimes_A R \to \widehat{B} $ is multiplication), that is, the diagram
			\begin{center}
				\begin{tikzcd}
					& &  \HH_n( A, \widehat{B}, l)
					\\\HH_n( Q, \widehat{B} \otimes_A R, l) & \HH_n( R, \widehat{B} \otimes_A R, l) 
					\\ \HH_n( Q, \widehat{B}, l) & \HH_n( R, \widehat{B}, l)
					
					\arrow[from=2-1, to=2-2, "\gamma_n"]
					\arrow[from=2-2, to=3-2]
					\arrow[from=2-1, to=3-1]
					\arrow[from=3-1, to=3-2]
					\arrow[from=1-3, to=2-2, "\eta", swap]
					\arrow[from=1-3, to=3-2, "\pi"]
				\end{tikzcd}
			\end{center}
			In the diagram above, $ \HH_n( Q, \widehat{B}, l) =0 $ for $ n \geq 2 $, while $ \eta $ is an isomorphism for all $ n $ by (flat) base change and $\pi$ is an isomorphism for $ n \geq 2 $ due to \cite[Lemma 1.7]{AvramovAnnals}. Thus, by the commutativity of that triangle, we obtain $ \gamma_n=0 $ for all $ n \geq 2 $. 
			
			\item[(d)] \emph{rci} $\Rightarrow$ \emph{mci}. If $ f $ is \emph{rci}, then we have an exact sequence
			\begin{equation} \tag{1} \label{eq1}
				0 = \HH_2( Q, \widehat{B}, l) \to \HH_2( R, \widehat{B}, l) \to \HH_1( Q, R, l) \to \HH_1( Q, \widehat{B}, l) \to \HH_1( R, \widehat{B}, l) \to 0
			\end{equation}
			Now, $ \HH_2( Q, R, l)=0=\HH_2( Q, \widehat{B}, l)=0 $, so $ I = \ker(Q \to R) $ and $ J = \ker(Q \to \widehat{B}) $ are generated by regular sequences and thus
			\begin{equation} \tag{2} \label{eq2}
				\dim_l \HH_1( Q, R, l)= \dim_l I/\mathfrak{n}I = \mu(I) = \dim Q - \dim R
			\end{equation}
			\begin{equation} \tag{3} \label{eq3}
				\dim_l \HH_1( Q, \widehat{B}, l)= \dim_l J/\mathfrak{n}J = \mu(J) = \dim Q - \dim \widehat{B}
			\end{equation}
			(where $ \mathfrak{n} $ is the maximal ideal of $ Q $ and $\mu(-)$ denotes the minimal number of generators). From \eqref{eq1}, \eqref{eq2} and \eqref{eq3},
			\begin{align*}
				0 &= \dim_l \HH_2( R, \widehat{B}, l) - \dim_l \HH_1( R, \widehat{B}, l) + \dim_l \HH_1( Q, B, l) - \dim_l \HH_1( Q, R, l)=
				\\&= \dim_l \HH_2( R, \widehat{B}, l) - \dim_l \HH_1( R, \widehat{B}, l) + \dim R - \dim B = \mathrm{d}(f).
			\end{align*}
			
			\item[(e)] \emph{mci} $ \nRightarrow $ \emph{rci}. In \cite[Theorem 3.5]{AvramovHenriquesSega}, it is shown that there exists a noetherian local ring $ A $ of dimension $ 0 $ and an exact zero-divisor $ x \in A $ (that is, $ 0 \neq (0 \colon x)  $ is a free module over $ A/(x) $, and thus of rank $ 1 $; see \cite[Proposition 1]{SotoGlasgow}) such that $ \cidim_A(B) = \infty $, where $ B = A/(x) $. Let $ f \colon A \to B $ be the canonical homomorphism. Since $ \HH^{Kos}_1(x) = (0 \colon x) $ is a free $ B $-module of rank $ 1 $, $ \HH_n( A, B, -)=0 $ for all $ n \geq 3 $ by \cite[Corollary 3']{BlancoMajadasRodicioJPAA}. Therefore, by Lemma \ref{lemmaGradeDepth}, $ \mathrm{d}(f)=0 $ and thus it is \emph{mci}. However, $ f $ is not \emph{rci}, since this would mean by (c) that $ f $ is also \emph{fci}, but Remark \ref{counterExampleH3fci} already settled that this is not true.
		\end{enumerate}
	\end{proof}

\end{document}